\documentclass[11pt]{article}

\usepackage[T1]{fontenc}
\usepackage{lmodern}
\usepackage[a4paper,margin=31mm]{geometry}
\usepackage{amsmath,amssymb,amsthm,mathtools}
\usepackage{microtype}
\usepackage{enumitem}
\usepackage{xcolor}
\usepackage{tikz}
\usetikzlibrary{arrows.meta}
\usepackage[colorlinks=true,linkcolor=blue,citecolor=blue,urlcolor=blue]{hyperref}

\definecolor{revisionblue}{RGB}{0,92,170}
\newcommand{\rev}[1]{\textcolor{revisionblue}{#1}}

\numberwithin{equation}{section}

\newtheorem{theorem}{Theorem}[section]
\newtheorem{proposition}[theorem]{Proposition}
\newtheorem{lemma}[theorem]{Lemma}
\newtheorem{corollary}[theorem]{Corollary}
\theoremstyle{remark}
\newtheorem{remark}[theorem]{Remark}
\newtheorem*{definition}{Definition}

\newcommand{\C}{\mathbb{C}}
\newcommand{\R}{\mathbb{R}}

\newcommand{\dist}{\operatorname{dist}}
\newcommand{\diam}{\operatorname{diam}}
\newcommand{\dimA}{\operatorname{dim}_{\mathrm A}}
\newcommand{\dimH}{\operatorname{dim}_{\mathrm H}}
\newcommand{\dimM}{\overline{\operatorname{dim}}_{\mathrm M}}
\newcommand{\dimMi}{\overline{\operatorname{dim}}_{\mathrm M,\Omega}}

\title{Distance-Weighted Norm Equivalences\\
for Analytic Functions on John Domains}
\author{Katsuhiko Matsuzaki\thanks{Department of Mathematics, School of Education, Waseda University,
Nishi-Waseda 1-6-1, Shinjuku, Tokyo 169-8050, Japan.\
\texttt{matsuzak@waseda.jp}}
\ and Huaying Wei\thanks{Center for Applied Mathematics, Tianjin
University, No.~92 Weijin Road, Tianjin 300072, PR China.
\texttt{hywei@tju.edu.cn}}}
\date{}

\begin{document}

\maketitle
\insert\footins{\noindent\footnotesize
The first author is partially supported  by  Japan Society for the
Promotion of Science (KAKENHI 23K25775 and 23K17656), and the second
author is partially supported  by the National Natural Science Foundation
of China (grant nos.~12271218 and 12571083).}

\begin{abstract}
Let $\Omega\subset\C$ be a bounded John domain and set
$\delta(z)=\dist(z,\partial\Omega)$.  For $1<p<\infty$ and
$\alpha>\dimA(\partial\Omega)-2$, we establish a norm equivalence between
$\int_\Omega |g|^p\delta^\alpha\,dA$ and
$\int_\Omega |g'|^p\delta^{\alpha+p}\,dA$ for analytic functions $g$ on
$\Omega$, with a point-evaluation term fixing the additive constant.
The estimate of the derivative term is local and holds on every proper
planar domain, whereas the converse follows from a distance-weighted
Poincar\'e inequality on John domains.

Taking $\alpha=mp-2$ yields the corresponding comparison between the
$m$-th and $(m+1)$-st derivatives.  For $m\ge2$ the boundary-dimension
condition is automatic, so the only dimension-sensitive case is the
comparison between $\int_\Omega |f'|^p\delta^{p-2}\,dA$ and
$\int_\Omega |f''|^p\delta^{2p-2}\,dA$ when $1<p<2$.  We show that this
restriction is sharp within the class of quasidisks by using self-similar
Rohde snowflakes.  We also construct, for every $s>1$, an inward-cusp
$s$-John domain on which the comparison fails, showing that the ordinary
John condition cannot in general be weakened.
\end{abstract}

\medskip
\noindent
\textbf{Keywords.}
John domain, distance weight, weighted Poincar\'e inequality, Ahlfors
regularity, Assouad dimension, Minkowski dimension, snowflake.

\medskip
\noindent
\textbf{2020 Mathematics Subject Classification.}
30H20, 30C62, 46E35, 26D10.

\section{Introduction and main results}

The starting point of the present work is a classical higher-order
derivative characterization of analytic Besov spaces on the unit
disk due to Zhu; see 
\cite[Theorem~A and p.~327]{Zhu-analytic-Besov}.
Let
\[
 d\lambda(z)=\frac{dA(z)}{(1-|z|^2)^2}
\]
be the M\"obius invariant measure on the unit disk $\mathbb D$,
where $dA(z)=dx\,dy$ is Euclidean area measure.  Its normalization is
immaterial for the norm equivalences considered here.

For
$1<p<\infty$, the analytic Besov seminorm is given by
\[
 \|f\|_{B_p}
 =
 \bigl\|(1-|z|^2)f'(z)\bigr\|_{L^p(\mathbb D,d\lambda)}.
\]
Zhu proved that, for every integer $m\geq2$, this seminorm can
equivalently be described in terms of the $m$-th derivative.  More
precisely,
\[
 \|f\|_{B_p}
 \asymp
 |f'(0)|+\cdots+|f^{(m-1)}(0)|  
 +\bigl\|(1-|z|^2)^m f^{(m)}(z)
   \bigr\|_{L^p(\mathbb D,d\lambda)}.
\]
Since $1-|z|^2\asymp \delta_{\mathbb D}(z)
 :=\operatorname{dist}(z,\partial\mathbb D)$,
this seminorm equivalence may be rewritten, in terms of powers of the
boundary distance, as
\[
 \left(
   \int_{\mathbb D}|f'(z)|^p
   \delta_{\mathbb D}(z)^{p-2}\,dA(z)
  \right)^{1/p} \asymp
 \sum_{j=1}^{m-1}|f^{(j)}(0)|+
 \left(
   \int_{\mathbb D}|f^{(m)}(z)|^p
   \delta_{\mathbb D}(z)^{mp-2}\,dA(z)
  \right)^{1/p}.
\]

The purpose of this paper is to develop a domain-geometric extension
of this disk result.  We replace $\mathbb D$ by a bounded John domain
$\Omega$ and the radial factor $1-|z|^2$ by the intrinsic boundary
distance
$\delta(z)=\delta_\Omega(z):=\dist(z,\partial\Omega)$.  Our original
motivation was to compare
\[
 \int_\Omega |f'(z)|^p\delta(z)^{p-2}\,dA(z)
 \quad\text{and}\quad
 \int_\Omega |f''(z)|^p\delta(z)^{2p-2}\,dA(z).
\]
The two integrals above are the first two members of the natural derivative
scale
\begin{equation}\label{eq:natural-scale}
 \int_\Omega |f^{(m)}(z)|^p\delta(z)^{mp-2}\,dA(z),
 \qquad m=1,2,\ldots.
\end{equation}
The proofs do not depend on the particular exponent $mp-2$.  We therefore first establish  a seminorm equivalence for the general distance weight
$\delta^\alpha$ and then specialize to $\alpha=mp-2$.

The estimate of the derivative by the function itself is entirely
local and requires neither a Whitney decomposition nor any global
geometric hypothesis.  The converse estimate is global and follows
from a weighted improved Poincar\'e inequality on John domains.

Recall that a bounded domain $\Omega\subset\mathbb C$ is an
\emph{$s$-John domain}, where $s\ge1$, if there exist a point
$x_0\in\Omega$, called a John center, and a constant $C_J>0$ such that
every $x\in\Omega$ can be joined to $x_0$ by a rectifiable curve
$\gamma:[0,\ell]\to\Omega$, parametrized by arclength from $x$, for which
\[
 \delta(\gamma(t))\ge C_Jt^s,
 \qquad 0\le t\le\ell.
\]
The case $s=1$ is the ordinary John condition.  Because the domain is
bounded, an ordinary John curve has uniformly bounded length; hence every
John domain is an $s$-John domain for $s>1$ after the constant $C_J$ is
adjusted.  The converse need not hold.

\begin{definition}
Let $(X,d)$ be a metric space and let $E\subset X$. 
For $x\in X$ and $R>0$, denote by $B(x,R)$
the open ball of radius $R$ centered at $x$.
The \emph{Assouad dimension} of $E$, denoted by
$\dim_{\mathrm A}(E)$, is defined by
the infimum of all exponents
$s\geq0$ for which there exists a constant $C>0$, independent of
$x$, $r$, and $R$, such that
\[
N_r\bigl(E\cap B(x,R)\bigr)
\leq
C\left(\frac{R}{r}\right)^s
\]
for all $x\in E$ and all $0<r<R$. Here $N_r(A)$ denotes the smallest
number of open balls of radius $r$ needed to cover a set $A\subset X$.
\end{definition}


For background on these and related metric notions, see \cite{H}. 
A nonempty bounded set $E\subset\mathbb C$ is called \emph{$d$-Ahlfors regular} for $d>0$ if there is a
constant $A\ge1$ such that
\[
A^{-1}r^d\le \mathcal H^d(E\cap B(\xi,r))\le A r^d
\]
for every $\xi\in E$ and $0<r\le\diam(E)$,
where $\mathcal H^d$ denotes the $d$-dimensional Hausdorff measure.
In general, the Hausdorff dimension $\dimH(E)$ is always less than or equal to $\dimA(E)$.
If $E$ is $d$-Ahlfors regular, then $\dimH(E)=\dimA(E)=d$.

Our general result is as follows.
\begin{theorem}\label{thm:main-general}
Let $\Omega\subset\C$ be a bounded John domain, let $z_0\in\Omega$,
and let $1<p<\infty$.  Suppose that
\begin{equation}\label{eq:general-assumptions}
 \alpha>\dim_{\mathrm A}(\partial\Omega)-2.
\end{equation}
Then every analytic function $g$ in $\Omega$ satisfies
\begin{equation}\label{eq:main-equivalence}
 \int_\Omega |g(z)|^p\delta(z)^\alpha\,dA(z)
 \asymp
 |g(z_0)|^p\delta(z_0)^{\alpha+2}
 +
 \int_\Omega |g'(z)|^p\delta(z)^{\alpha+p}\,dA(z),
\end{equation}
where the comparison constants may depend on
$\Omega,p,\alpha$, and $z_0$, but not on $g$.
\end{theorem}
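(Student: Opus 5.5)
The plan has two halves: a local estimate for the derivative term (the direction $\gtrsim$) and a weighted Poincaré inequality on John domains (the direction $\lesssim$).

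\emph{Direction $\gtrsim$, the derivative term.} This part is local, as announced in the introduction, and uses no geometry of $\Omega$. Fix $z\in\Omega$ and let $D_z=D(z,\delta(z)/2)$. On $D_z$ we have $\delta(w)\asymp\delta(z)$. The Cauchy estimate together with the subharmonicity of $|g|^p$ (via the mean value inequality on $D_z$) give
\[
|g'(z)|^p\lesssim \delta(z)^{-p-2}\int_{D_z}|g|^p\,dA .
\]
Multiplying by $\delta(z)^{\alpha+p}$ gives $|g'(z)|^p\delta(z)^{\alpha+p}\lesssim \delta(z)^{-2}\int_{D_z}|g(w)|^p\delta(w)^\alpha\,dA(w)$. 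Integrating in $z$ and applying Fubini, we note that $w\in D_z$ forces $|z-w|<\delta(z)/2$ and $\delta(z)\asymp\delta(w)$. Hence for fixed $w$ the set of admissible $z$ lies in a disk of radius $\lesssim\delta(w)$, and the factor $\delta(z)^{-2}$ integrates to $O(1)$. The point term is handled the same way: $|g(z_0)|^p\delta(z_0)^{\alpha+2}\lesssim\int_{D_{z_0}}|g|^p\delta^\alpha\,dA$. This direction holds for every $\alpha\in\R$ on any proper domain.

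\emph{Direction $\lesssim$, the weighted Poincaré inequality.} Here we need $\int_\Omega|g|^p\delta^\alpha \lesssim |g(z_0)|^p\delta(z_0)^{\alpha+2}+\int_\Omega|g'|^p\delta^{\alpha+p}$. I would first prove the Poincaré-type inequality
\[
\int_\Omega|g-g_{Q_0}|^p\delta^\alpha\,dA\lesssim\int_\Omega|\nabla g|^p\delta^{\alpha+p}\,dA
\]
via a Whitney decomposition and the chaining argument for John domains. Here $Q_0$ is a central Whitney cube containing $z_0$, and $g_{Q_0}$ is the mean of $g$ over it. Each Whitney cube $Q$ is connected to $Q_0$ by a John chain $Q=Q_k,\dots,Q_0$ whose side lengths grow geometrically, $\ell(Q_j)\gtrsim$ (distance along the chain), and whose members satisfy the shadow property: every $Q'$ in the chain of $Q$ has $Q\subset C\,\mathrm{Sh}(Q')$, where the shadow has diameter $\lesssim\ell(Q')$. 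Telescoping gives
\[
|g_Q-g_{Q_0}|\le\sum_{Q'\in\mathrm{chain}(Q)}\ell(Q')\,\Bigl(\frac{1}{|Q'^*|}\int_{Q'^*}|g'|^p\,dA\Bigr)^{1/p},
\]
and on each cube the local Poincaré inequality gives $\int_Q|g-g_Q|^p\lesssim\ell(Q)^p\int_{Q^*}|g'|^p$, which is already fine. Summing $\ell(Q)^{\alpha+2}|g_Q-g_{Q_0}|^p$ over $Q$ and interchanging the order of summation, each $Q'$ collects the weight $\sum_{Q\subset\mathrm{Sh}(Q')}\ell(Q)^{\alpha+2}$. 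To bound this, I would insert a factor $(\ell(Q')/\ell(Q))^{\epsilon}$ and use Hölder on the chain sum; the geometric growth of the chain makes the resulting series summable. Alternatively, one can use a discrete Hardy inequality on the tree of Whitney cubes.

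\emph{Main obstacle.} The crux is the shadow-sum estimate
\[
\sum_{Q\subset \mathrm{Sh}(Q'),\ \ell(Q)=2^{-j}\ell(Q')}\ell(Q)^{\alpha+2}\lesssim 2^{-j\eta}\,\ell(Q')^{\alpha+2}\quad\text{for some }\eta>0.
\]
Whitney cubes of size $2^{-j}\ell(Q')$ inside the shadow lie within distance $\asymp 2^{-j}\ell(Q')$ of $\partial\Omega\cap B(\xi,C\ell(Q'))$, so their number is at most $N_{2^{-j}\ell(Q')}$ of that boundary piece. By the Assouad dimension this is $\lesssim 2^{jd}$ for any $d\in(\dimA\partial\Omega,\alpha+2)$. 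The sum is therefore $\lesssim 2^{j(d-\alpha-2)}\ell(Q')^{\alpha+2}$, which decays geometrically precisely because $\alpha>\dimA(\partial\Omega)-2$. I expect this step to need the most care, in particular making the Hölder weights compatible so that the remaining geometric factor is absorbed. Summing over $j$ yields the weighted Poincaré inequality.

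\emph{Assembling the inequality.} Finally I would replace $g_{Q_0}$ by $g(z_0)$. By the mean value property and the local Poincaré inequality on $Q_0^*$, $|g_{Q_0}-g(z_0)|^p\lesssim\int_{Q_0^*}|g'|^p$. Also $\int_\Omega\delta^\alpha<\infty$, since $\alpha>\dimA-2\ge-1$ forces $\alpha+2$ to exceed the Minkowski dimension of $\partial\Omega$. Then $\int|g|^p\delta^\alpha\lesssim\int|g-g_{Q_0}|^p\delta^\alpha+|g_{Q_0}|^p\int_\Omega\delta^\alpha$, and the constants depend only on $\Omega,p,\alpha,z_0$.
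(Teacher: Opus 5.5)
Your proposal is correct. The direction $\gtrsim$ is the paper's own argument: Lemma~\ref{lem:point}, Lemma~\ref{lem:cauchy-average} and Proposition~\ref{prop:local-reverse}. You diverge only in the global direction.

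\textbf{What the paper does.} It does not prove a weighted Poincar\'e inequality. It cites L\'opez-Garc\'{\i}a--Ojea (Theorem~\ref{thm:LGO}), whose hypotheses require $u\in L^p(\Omega,\delta^\alpha)$ a priori and a vanishing $\delta^\alpha$-weighted mean. It therefore has to do three further things:
\begin{enumerate}[label=(\alph*)]
\item switch from the weighted mean to the average over $B_0=B(z_0,\delta_0/4)$, for bounded $u$ only (Lemma~\ref{lem:bounded-base});
\item remove the boundedness assumption by truncating the harmonic components $\operatorname{Re} g$ and $\operatorname{Im} g$ and applying Fatou (Lemma~\ref{lem:harmonic-base});
\item read off $g(z_0)$ from the mean value property on $B_0$.
\end{enumerate}

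\textbf{What you do instead.} You prove the Poincar\'e inequality yourself by Whitney chaining, which is the mechanism the paper only sketches in the remark after Theorem~\ref{thm:LGO}. The Assouad-dimension count of cubes in a shadow gives the decay $2^{j(d-\alpha-2+\epsilon p)}$. Taking $\epsilon<(\alpha+2-\dimA(\partial\Omega))/p$ and $d$ strictly between $\dimA(\partial\Omega)$ and $\alpha+2-\epsilon p$ closes the H\"older bookkeeping you flagged.

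\textbf{What your route buys.} You anchor at the average over a fixed cube $Q_0$, and every step is an inequality between sums of nonnegative terms. Hence your estimate holds in the extended sense for every analytic $g$, and the truncation and Fatou step disappears entirely. It also makes completely transparent where the hypothesis $\alpha>\dimA(\partial\Omega)-2$ enters.

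\textbf{What it costs.} You must build the John chains yourself. The property you actually need is not geometric growth of side lengths. It is the following:
\begin{enumerate}[label=(\roman*)]
\item each chain has boundedly many cubes of each dyadic size;
\item every chain cube satisfies $\ell(Q')\gtrsim\ell(Q)$;
\item $Q\subset B(x_{Q'},C\ell(Q'))$ for every $Q'$ in the chain of $Q$;
\item a fixed finite chain links the John center's cube to $Q_0$.
\end{enumerate}
The paper outsources all of this geometry to a published theorem and keeps its own argument to a few lines.
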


Taking $g=f^{(m)}$ and $\alpha=mp-2$ gives the natural consecutive
higher-derivative comparison.

\begin{corollary}
\label{cor:consecutive}
Let $\Omega\subset\C$ be a bounded John domain, let $z_0\in\Omega$,
let $1<p<\infty$, and let $m\ge1$ be an integer.  Assume that
$mp>\dimA(\partial\Omega)$.
Then every analytic function $f$ in $\Omega$ satisfies
\begin{align}\label{eq:consecutive}
 \int_\Omega |f^{(m)}(z)|^p\delta(z)^{mp-2}\,dA(z)
 \asymp{}& |f^{(m)}(z_0)|^p\delta(z_0)^{mp} \\
 &+\int_\Omega |f^{(m+1)}(z)|^p
       \delta(z)^{(m+1)p-2}\,dA(z).
 \nonumber
\end{align} 
\end{corollary}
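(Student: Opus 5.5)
The corollary is a direct specialization of Theorem~\ref{thm:main-general}, so the plan is to substitute and check hypotheses. Given $f$ analytic in $\Omega$ and an integer $m\ge1$, set $g=f^{(m)}$. This $g$ is analytic in $\Omega$ and $g'=f^{(m+1)}$. Take $\alpha=mp-2$.

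The only hypothesis to verify is \eqref{eq:general-assumptions}, namely $\alpha>\dimA(\partial\Omega)-2$. With $\alpha=mp-2$ this reads $mp>\dimA(\partial\Omega)$, which is exactly the assumption of the corollary. The remaining hypotheses are already in force: $\Omega$ is a bounded John domain, $z_0\in\Omega$, and $1<p<\infty$.

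Next I compute the exponents appearing in \eqref{eq:main-equivalence}:
\[
\alpha+2=mp, \qquad \alpha+p=(m+1)p-2.
\]
With these, the left side of \eqref{eq:main-equivalence} becomes $\int_\Omega|f^{(m)}|^p\delta^{mp-2}\,dA$. The point term becomes $|f^{(m)}(z_0)|^p\delta(z_0)^{mp}$, and the derivative term becomes $\int_\Omega|f^{(m+1)}|^p\delta^{(m+1)p-2}\,dA$. This is precisely \eqref{eq:consecutive}.

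The constants in the theorem depend only on $\Omega,p,\alpha,z_0$. Since $\alpha$ is determined by $m$ and $p$, the constants here depend on $\Omega,p,m,z_0$ and not on $f$.

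There is no real obstacle: all of the analytic content is in Theorem~\ref{thm:main-general}. The only point needing care is the equivalence between the dimension hypothesis on $\alpha$ and the hypothesis $mp>\dimA(\partial\Omega)$, which is the one-line computation above. As a remark, $\dimA(\partial\Omega)\le2$ for any planar set. So for $m\ge2$ we have $mp\ge2p>2\ge\dimA(\partial\Omega)$, and the hypothesis holds automatically, as the abstract states.
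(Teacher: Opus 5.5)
Your proposal is correct and is exactly the paper's argument: substitute $g=f^{(m)}$ and $\alpha=mp-2$ into Theorem~\ref{thm:main-general}, noting that $\alpha>\dimA(\partial\Omega)-2$ becomes $mp>\dimA(\partial\Omega)$, and that $\alpha+2=mp$ and $\alpha+p=(m+1)p-2$. Your closing remark is also right: for $m\ge2$ the trivial bound $\dimA(\partial\Omega)\le2<2p$ already suffices, so the John/porosity argument giving $\dimA(\partial\Omega)<2$ is only needed in the case $m=1$, $p=2$.
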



For the boundary of a bounded planar domain,
$1\le\dimA(\partial\Omega)\le2$, while the John condition excludes the
endpoint $2$.  Indeed, every bounded John domain satisfies a uniform
interior corkscrew condition (see \cite[Lemma~8.9]{ABBS}); consequently,
$\partial\Omega$ is uniformly porous in $\mathbb C$.  By
\cite[Theorem~5.2]{Luukkainen}, a set $E\subset\mathbb R^n$ is uniformly
porous if and only if $\dimA(E)<n$.  It follows that
$\dimA(\partial\Omega)<2$.  Combining this observation with
Corollary~\ref{cor:consecutive} gives the following consequences.

\begin{corollary}
\label{cor:delicate}
Let $\Omega\subset\C$ be a bounded John domain and let $1<p<\infty$.
Then the following assertions hold.
\begin{enumerate}[label=\textup{(\roman*)}]
 \item For every integer $m\ge2$, the comparison
       \eqref{eq:consecutive} holds without any additional assumption
       on $\partial\Omega$.
 \item For $m=1$, the comparison holds whenever $2\le p<\infty$.
 \item For $m=1$ and $1<p<2$, the comparison holds if $\dimA(\partial \Omega)<p$.
\end{enumerate}
Consequently, in the natural scale \eqref{eq:natural-scale}, the only
comparison for which a boundary-dimension restriction can occur is the
one between $f'$ and $f''$, with weights $\delta^{p-2}$ and
$\delta^{2p-2}$.
\end{corollary}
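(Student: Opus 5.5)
The plan is to deduce all three assertions directly from Corollary~\ref{cor:consecutive}, whose only hypothesis beyond the John condition and $1<p<\infty$ is the inequality $mp>\dimA(\partial\Omega)$. The argument therefore reduces to checking this inequality in each case. The single geometric input is the bound $\dimA(\partial\Omega)<2$ for a bounded John domain, established in the paragraph preceding the statement. That paragraph obtains it from the uniform interior corkscrew condition \cite[Lemma~8.9]{ABBS}, which makes $\partial\Omega$ uniformly porous, combined with Luukkainen's characterization \cite[Theorem~5.2]{Luukkainen}.

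For (i), take $m\ge2$ and $p>1$. Then $mp\ge 2p>2>\dimA(\partial\Omega)$, so the hypothesis of Corollary~\ref{cor:consecutive} holds and \eqref{eq:consecutive} follows with no further assumption. For (ii), take $m=1$ and $p\ge2$. Then $mp=p\ge2>\dimA(\partial\Omega)$. Here the strict inequality $\dimA(\partial\Omega)<2$ is what is needed, since at $p=2$ the bound $\dimA(\partial\Omega)\le2$ would not suffice. For (iii), the assumption $\dimA(\partial\Omega)<p$ is exactly the hypothesis $mp>\dimA(\partial\Omega)$ with $m=1$.

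For the closing sentence, I combine the cases. Every comparison in the scale \eqref{eq:natural-scale} with $m\ge2$ holds unconditionally on John domains by (i). For $m=1$, the comparison holds unconditionally when $p\ge2$ by (ii). Hence the only comparison in which the hypothesis on $\dimA(\partial\Omega)$ can actually restrict anything is $m=1$ with $1<p<2$, namely between $f'$ with weight $\delta^{p-2}$ and $f''$ with weight $\delta^{2p-2}$.

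There is no real obstacle here. The only point requiring care is that the porosity argument yields the strict inequality $\dimA(\partial\Omega)<2$; this is used at the endpoints $p=2$ in (ii) and $m=2$, $p\to1^+$ in (i). Since that inequality was already justified before the statement, the proof is a short case check.
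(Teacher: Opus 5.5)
Your proposal is correct and is essentially the paper's argument: the strict bound $\dimA(\partial\Omega)<2$ (interior corkscrew $\Rightarrow$ uniform porosity $\Rightarrow$ Luukkainen's criterion), combined with Corollary~\ref{cor:consecutive}, followed by checking $mp>\dimA(\partial\Omega)$ in each case. One minor correction to your closing remark: in (i) the strict bound is not needed, since $mp\ge 2p>2\ge\dimA(\partial\Omega)$ for every fixed $p>1$, so the porosity input is used only at the endpoint $p=2$ of (ii).
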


Bounded quasidisks are uniform domains and hence John domains.  A bounded
chord-arc domain is a quasidisk whose boundary is $1$-Ahlfors regular, so
$\dimA(\partial\Omega)=1$.  Corollary~\ref{cor:delicate} therefore shows
that, when $m=1$, the comparison holds on every bounded chord-arc domain
for $1<p<\infty$ and on every bounded quasidisk for $2\le p<\infty$.

The boundary-dimension hypothesis $\dimA(\partial\Omega)<p$ in
part~\textup{(iii)} is sharp even within the class of quasidisks.  In
Section~\ref{sec:counterexamples}, for each $1<p<2$ we construct a
bounded Rohde snowflake quasidisk with $\dimA(\partial\Omega)\ge p$ and
$\delta^{p-2}\notin L^1(\Omega)$.  The constant analytic function
$g\equiv1$ then disproves the global estimate with weights
$\delta^{p-2}$ and $\delta^{2p-2}$.

Part~\textup{(i)} removes the boundary-dimension hypothesis when
$m\ge2$, but the John-domain hypothesis itself cannot in general be
weakened.  A standard geometric enlargement of the class of John domains
is the class of $s$-John domains with $s>1$.  In
Section~\ref{sec:cusp-counterexample}, for every prescribed $s>1$ we
construct an $s$-John inward-cusp domain with rectifiable boundary on
which the global comparison fails for every $m\ge1$.

The consecutive estimates can also be iterated: 
The assumptions for the order $m$ imply the corresponding assumptions
for every $k\ge m$.  Indeed, $kp\ge mp>\dimA(\partial\Omega)$.
We may therefore apply Corollary~\ref{cor:consecutive} successively for
$k=m,m+1,\ldots,N-1$.  A finite iteration yields

\begin{corollary}
\label{cor:iterated}
Under the assumptions of Corollary~\ref{cor:consecutive}, let $N>m$ be
an integer.  Then
\begin{align}\label{eq:iterated}
 \int_\Omega |f^{(m)}(z)|^p\delta(z)^{mp-2}\,dA(z)
 \asymp{}& \sum_{k=m}^{N-1}
 |f^{(k)}(z_0)|^p\delta(z_0)^{kp} \\
 &+\int_\Omega |f^{(N)}(z)|^p\delta(z)^{Np-2}\,dA(z).
 \nonumber
\end{align}
In particular, if $m\ge2$, this holds on every bounded John domain
without further boundary assumptions.
\end{corollary}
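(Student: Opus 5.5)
The plan is an induction on $N$, applying Corollary~\ref{cor:consecutive} once at each step. Write
\[
 I_k(f)=\int_\Omega |f^{(k)}(z)|^p\delta(z)^{kp-2}\,dA(z),\qquad
 a_k(f)=|f^{(k)}(z_0)|^p\delta(z_0)^{kp}.
\]
For each $k\ge m$ we have $kp\ge mp>\dimA(\partial\Omega)$, so Corollary~\ref{cor:consecutive} applies with $m$ replaced by $k$. It gives constants $0<c_k\le C_k$, depending only on $\Omega,p,k,z_0$, such that
\[
 c_k\bigl(a_k(f)+I_{k+1}(f)\bigr)\le I_k(f)\le C_k\bigl(a_k(f)+I_{k+1}(f)\bigr)
\]
for every analytic $f$. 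This should be read as an identity in $[0,\infty]$: the proof of Theorem~\ref{thm:main-general} yields each one-sided bound whenever its right-hand side is finite, so both sides are finite or infinite simultaneously.

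The base case $N=m+1$ is exactly Corollary~\ref{cor:consecutive}. For the inductive step, assume
\[
 I_m\asymp\sum_{k=m}^{N-1}a_k+I_N .
\]
Substitute $I_N\asymp a_N+I_{N+1}$, which is the level-$N$ comparison. For the upper bound,
\[
 I_m\le C\Bigl(\sum_{k=m}^{N-1}a_k+C_N(a_N+I_{N+1})\Bigr)\le C'\Bigl(\sum_{k=m}^{N}a_k+I_{N+1}\Bigr).
\]
For the lower bound,
\[
 I_m\ge c\Bigl(\sum_{k=m}^{N-1}a_k+c_N(a_N+I_{N+1})\Bigr)\ge c'\Bigl(\sum_{k=m}^{N}a_k+I_{N+1}\Bigr).
\]
After $N-m$ steps the constants depend only on $\Omega,p,m,N,z_0$. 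Since the number of steps is finite, the constants cannot degenerate, and this gives \eqref{eq:iterated}.

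For the final sentence, when $m\ge2$ we have $mp>2>\dimA(\partial\Omega)$. The strict inequality $\dimA(\partial\Omega)<2$ holds for every bounded John domain, by the porosity argument given before Corollary~\ref{cor:delicate}. So the hypothesis of Corollary~\ref{cor:consecutive} is automatic, exactly as in Corollary~\ref{cor:delicate}(i).

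The only delicate point is the bookkeeping of infinite values: a chained comparison must not silently use $\infty\le\infty$ in a way that hides a failure. I would handle this by checking that each one-sided inequality in Corollary~\ref{cor:consecutive} holds unconditionally in $[0,\infty]$. Because $\delta(z_0)>0$, the terms $a_k(f)$ are always finite, so finiteness is decided by the integrals alone. Once that is in place, the rest is routine constant tracking.
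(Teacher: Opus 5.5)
Your proposal is correct and is essentially the paper's argument. You apply Corollary~\ref{cor:consecutive} at each level $k=m,\dots,N-1$, which is legitimate since $kp\ge mp>\dimA(\partial\Omega)$, and chain the finitely many two-sided comparisons; your bookkeeping of values in $[0,\infty]$ is a sound, if routine, addition. For the final sentence, porosity is not actually needed: $m\ge2$ and $p>1$ give $mp\ge 2p>2\ge\dimA(\partial\Omega)$ for any planar set.
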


The paper is organized as follows.  Sections~\ref{sec:local} and
\ref{sec:base-disk} prove the two directions of
Theorem~\ref{thm:main-general} (see Propositions \ref{prop:local-reverse} and \ref{prop:global-analytic}).  Section~\ref{sec:poincare-input}
collects the required weighted Poincar\'e inequality and the relevant
distance-weight integrability criterion.  Sections~\ref{sec:counterexamples}
and \ref{sec:cusp-counterexample} use Rohde snowflakes and an inward-cusp
domain, respectively, to establish the sharpness of the boundary-dimension
and domain-geometric hypotheses.

\bigskip
\noindent
\textbf{Acknowledgments.}This work was motivated by the recent
preprint \cite{LSY}.

\section{\texorpdfstring{The local direction:
from lower to higher derivatives}{The local direction: from lower to
higher derivatives}}
\label{sec:local}

In this section no John condition is needed.  In fact, the argument
works on every proper planar domain and for every real weight exponent $\alpha$.

\begin{lemma}
\label{lem:point}
Let $\Omega\subsetneq\C$ be a domain, $1<p<\infty$, $\alpha\in\R$,
and let $g$ be analytic in $\Omega$.  Then, for every $z_0\in\Omega$,
\begin{equation}\label{eq:point-evaluation}
 |g(z_0)|^p\delta(z_0)^{\alpha+2}
 \le C_{\alpha}\int_\Omega |g(z)|^p\delta(z)^\alpha\,dA(z).
\end{equation}
\end{lemma}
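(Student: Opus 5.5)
Let $r=\delta(z_0)/2$ and work on the disk $D=B(z_0,r)$, which satisfies $\overline{D}\subset\Omega$. The plan has three steps: control $|g(z_0)|^p$ by an area average over $D$, compare $\delta$ with $\delta(z_0)$ on $D$, and reinsert the weight.

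\emph{Step 1 (mean value).} Since $|g|^p$ is subharmonic for analytic $g$ and any $p>0$, the sub-mean value property over disks gives
\[
 |g(z_0)|^p\le \frac{1}{\pi r^2}\int_D |g(z)|^p\,dA(z).
\]

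\emph{Step 2 (comparability of $\delta$ on $D$).} Because $\delta$ is $1$-Lipschitz, every $z\in D$ satisfies
\[
 \tfrac12\delta(z_0)\le\delta(z)\le\tfrac32\delta(z_0).
\]
Consequently $\delta(z)^\alpha\ge c_\alpha\,\delta(z_0)^\alpha$ on $D$, with $c_\alpha=2^{-\alpha}$ if $\alpha\ge0$ and $c_\alpha=(3/2)^{\alpha}$ if $\alpha<0$.

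\emph{Step 3 (combine).} Multiplying the inequality of Step 1 by $\delta(z_0)^{\alpha+2}$ and using $r^2=\delta(z_0)^2/4$ yields
\[
 |g(z_0)|^p\delta(z_0)^{\alpha+2}\le \frac{4}{\pi}\int_D |g|^p\delta(z_0)^\alpha\,dA\le \frac{4}{\pi c_\alpha}\int_\Omega |g|^p\delta^\alpha\,dA.
\]
This is \eqref{eq:point-evaluation} with $C_\alpha=4/(\pi c_\alpha)$.

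No step is a real obstacle. The only point needing care is the sign of $\alpha$ in the comparison of Step 2, and that is handled by the two-sided bound on $\delta$ together with the case split above. Note also that the resulting constant $C_\alpha$ depends only on $\alpha$: it is independent of $p$, $g$, $z_0$ and $\Omega$.
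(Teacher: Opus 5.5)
Your proposal is correct and follows the paper's proof essentially step for step. Like the paper, you apply the sub-mean value inequality for $|g|^p$ on $B(z_0,\delta(z_0)/2)$ and use the comparability $\tfrac12\delta(z_0)\le\delta\le\tfrac32\delta(z_0)$ there; your sign case split on $\alpha$ simply makes explicit the constant that the paper leaves as $C_\alpha$.
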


\begin{proof}
Put $\delta_0=\delta(z_0)$ and
$B_0=B(z_0,\delta_0/2)$.  Since $|g|^p$ is subharmonic,
\[
 |g(z_0)|^p
 \le \frac{4}{\pi\delta_0^2}\int_{B_0}|g(z)|^p\,dA(z).
\]
For $z\in B_0$ we have
$\delta_0/2\le\delta(z)\le3\delta_0/2$.  Hence
\[
 \int_{B_0}|g(z)|^p\,dA(z)
 \le C_\alpha\delta_0^{-\alpha}
 \int_{B_0}|g(z)|^p\delta(z)^\alpha\,dA(z).
\]
Combining the two estimates proves \eqref{eq:point-evaluation}.
\end{proof}

\begin{lemma}
\label{lem:cauchy-average}
Let $\Omega\subsetneq\C$ be a domain, $1<p<\infty$, and let $g$ be
analytic in $\Omega$.  For every $z\in\Omega$,
\begin{equation}\label{eq:local-cauchy}
 |g'(z)|^p
 \le C_p\delta(z)^{-p-2}
 \int_{B(z,\delta(z)/4)}|g(\zeta)|^p\,dA(\zeta).
\end{equation}
\end{lemma}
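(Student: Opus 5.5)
The plan is to combine the Cauchy integral formula for $g'$ on a small circle with the sub-mean value property of $|g|^p$ on small discs. Fix $z\in\Omega$ and write $\delta=\delta(z)$; then $\overline{B(z,\delta/4)}\subset\Omega$. For $w\in B(z,\delta/8)$ we have $B(w,\delta/8)\subset B(z,\delta/4)$, and $|g|^p$ is subharmonic. Hence
\[
 |g(w)|^p\le \frac{64}{\pi\delta^2}\int_{B(z,\delta/4)}|g(\zeta)|^p\,dA(\zeta).
\]
This gives a uniform sup bound for $|g|$ on $\overline{B(z,\delta/16)}$, say: its $p$-th power is at most a constant times $\delta^{-2}$ times the integral over $B(z,\delta/4)$.

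Next we apply Cauchy's estimate on the circle $|\zeta-z|=\delta/16$:
\[
 |g'(z)|\le \frac{16}{\delta}\max_{|\zeta-z|=\delta/16}|g(\zeta)|.
\]
Raising this to the $p$-th power and inserting the sup bound from the first step yields
\[
 |g'(z)|^p\le 16^p\delta^{-p}\cdot\frac{64}{\pi}\delta^{-2}\int_{B(z,\delta/4)}|g|^p\,dA,
\]
which is \eqref{eq:local-cauchy} with $C_p=64\cdot16^p/\pi$.

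There is no real obstacle here. The only care needed is in the choice of radii: the discs used in the mean value inequality must stay inside $B(z,\delta/4)$, and that ball must lie in $\Omega$. Both hold because $\delta/8+\delta/16<\delta/4<\delta$. Note that $p>1$ is not used; the argument works for every $p>0$, since $|g|^p$ is subharmonic for every such $p$.
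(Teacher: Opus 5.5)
Your proof is correct, but it takes a different route from the paper's. The paper never passes through a pointwise bound for $g$. It writes $g'(z)$ by Cauchy's formula on every circle $|\zeta-z|=\rho$ with $\rho\in[R/2,R]$, $R=\delta(z)/4$. It then applies H\"older's inequality on each circle to get $|g'(z)|^p\le C_p\rho^{-p}\int_0^{2\pi}|g(z+\rho e^{it})|^p\,dt$. Finally it integrates against $\rho\,d\rho$ over $[R/2,R]$, which turns the circle means directly into the area integral over $B(z,R)$.

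You split the job into two steps. First an $L^p\to L^\infty$ step: the sub-mean-value property of $|g|^p$ on the discs $B(w,\delta/8)\subset B(z,\delta/4)$. Then the sup-norm Cauchy estimate on the circle of radius $\delta/16$.

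The paper's version is a single averaged Cauchy formula. It uses only H\"older's inequality and no subharmonicity, but that H\"older step needs $p\ge1$. Your version relies on subharmonicity of $|g|^p$, which the paper invokes anyway in Lemma~\ref{lem:point}. In exchange, as you note, it is valid for every $0<p<\infty$.
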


\begin{proof}
Fix $z\in\Omega$ and put $R=\delta(z)/4$.  For every
$\rho\in[R/2,R]$, Cauchy's formula gives
\[
 g'(z)=\frac{1}{2\pi\rho}
 \int_0^{2\pi}g(z+\rho e^{it})e^{-it}\,dt.
\]
By H\"older's inequality,
\[
 |g'(z)|^p
 \le C_p\rho^{-p}\int_0^{2\pi}|g(z+\rho e^{it})|^p\,dt.
\]
Multiplying by $\rho$ and integrating over $R/2\le\rho\le R$, we obtain
\[
 R^2|g'(z)|^p
 \le C_pR^{-p}\int_{B(z,R)}|g(\zeta)|^p\,dA(\zeta).
\]
Since $R=\delta(z)/4$, this is \eqref{eq:local-cauchy}.
\end{proof}

\begin{proposition}
\label{prop:local-reverse}
Let $\Omega\subsetneq\C$ be a domain, let $1<p<\infty$, and let
$\alpha\in\R$.  Every analytic function $g$ in $\Omega$ satisfies
\begin{equation}\label{eq:derivative-from-function}
 \int_\Omega |g'(z)|^p\delta(z)^{\alpha+p}\,dA(z)
 \le C_{p,\alpha}\int_\Omega |g(z)|^p\delta(z)^\alpha\,dA(z).
\end{equation}
Consequently, for every $z_0\in\Omega$,
\begin{align}\label{eq:complete-local-reverse}
 |g(z_0)|^p\delta(z_0)^{\alpha+2}
 +\int_\Omega |g'(z)|^p\delta(z)^{\alpha+p}\,dA(z)
 \le C_{p,\alpha}\int_\Omega |g(z)|^p\delta(z)^\alpha\,dA(z).
\end{align}
\end{proposition}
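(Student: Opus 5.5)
The plan is to integrate the pointwise bound of Lemma~\ref{lem:cauchy-average} against the weight $\delta^{\alpha+p}$ and exchange the order of integration. Multiplying \eqref{eq:local-cauchy} by $\delta(z)^{\alpha+p}$ gives
\[
 |g'(z)|^p\delta(z)^{\alpha+p}
 \le C_p\,\delta(z)^{\alpha-2}\int_{B(z,\delta(z)/4)}|g(\zeta)|^p\,dA(\zeta).
\]
Integrating over $z\in\Omega$ and applying Tonelli's theorem (all integrands are nonnegative), the right-hand side becomes
\[
 C_p\int_\Omega |g(\zeta)|^p
 \left(\int_{\{z\in\Omega:\,|z-\zeta|<\delta(z)/4\}}\delta(z)^{\alpha-2}\,dA(z)\right)dA(\zeta).
\]

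The key step is to bound the inner integral by $C_\alpha\delta(\zeta)^\alpha$, uniformly in $\zeta$. Suppose $|z-\zeta|<\delta(z)/4$. Since $\delta$ is $1$-Lipschitz,
\[
 |\delta(z)-\delta(\zeta)|\le|z-\zeta|<\delta(z)/4,
\]
and therefore $\tfrac45\delta(\zeta)<\delta(z)<\tfrac43\delta(\zeta)$. Two consequences follow:
\begin{itemize}
 \item $\delta(z)^{\alpha-2}\le C_\alpha\,\delta(\zeta)^{\alpha-2}$ for every real $\alpha$, taking whichever endpoint bound is appropriate to the sign of $\alpha-2$;
 \item the region of integration lies in the disk $B(\zeta,\delta(\zeta)/3)$, whose area is $\pi\delta(\zeta)^2/9$.
\end{itemize}
Hence the inner integral is at most $C_\alpha\delta(\zeta)^{\alpha}$, which proves \eqref{eq:derivative-from-function}.

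Finally, \eqref{eq:complete-local-reverse} is obtained by adding \eqref{eq:derivative-from-function} to the point-evaluation bound \eqref{eq:point-evaluation} of Lemma~\ref{lem:point}. The only point needing care is the comparability $\delta(z)\asymp\delta(\zeta)$ on the swapped region. Everything else is bookkeeping, and no geometric hypothesis on $\Omega$ enters beyond $\Omega\ne\C$, which guarantees $\delta<\infty$.
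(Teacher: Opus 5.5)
Your proposal is correct and follows the paper's proof essentially step for step. You integrate the Cauchy estimate \eqref{eq:local-cauchy} against $\delta^{\alpha+p}$ and swap the order of integration by Tonelli. You then use the $1$-Lipschitz comparability $\delta(z)\asymp\delta(\zeta)$ and the inclusion of the swapped region in $B(\zeta,\delta(\zeta)/3)$ to bound the inner integral by $C_\alpha\delta(\zeta)^\alpha$, and add Lemma~\ref{lem:point} for the point-evaluation term.
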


\begin{proof}
Multiplying \eqref{eq:local-cauchy} by $\delta(z)^{\alpha+p}$ and
integrating gives
\begin{align*}
 \int_\Omega |g'(z)|^p\delta(z)^{\alpha+p}\,dA(z)
 &\le C_p\int_\Omega\delta(z)^{\alpha-2}
 \left(\int_{B(z,\delta(z)/4)}|g(\zeta)|^p\,dA(\zeta)\right)dA(z).
\end{align*}
The integrand is nonnegative, so Tonelli's theorem permits us to
reverse the order of integration.  For fixed $\zeta\in\Omega$, set
\[
 E_\zeta=\left\{z\in\Omega:
 |z-\zeta|<\frac{\delta(z)}{4}\right\}.
\]
If $z\in E_\zeta$, the $1$-Lipschitz property of the distance function
gives
$3\delta(z)/4<\delta(\zeta)<5\delta(z)/4$.
In particular,
$E_\zeta\subset B(\zeta,\delta(\zeta)/3)$ and
$\delta(z)\asymp\delta(\zeta)$ on $E_\zeta$.  Therefore
\[
 \int_{E_\zeta}\delta(z)^{\alpha-2}\,dA(z)
 \le C_\alpha\delta(\zeta)^{\alpha-2}|E_\zeta|
 \le C_\alpha\delta(\zeta)^\alpha.
\]
A second application of Tonelli's theorem now yields
\begin{align*}
 \int_\Omega |g'(z)|^p\delta(z)^{\alpha+p}\,dA(z)
 &\le C_{p,\alpha}\int_\Omega |g(\zeta)|^p
 \left(\int_{E_\zeta}\delta(z)^{\alpha-2}\,dA(z)\right)dA(\zeta)\\
 &\le C_{p,\alpha}\int_\Omega
 |g(\zeta)|^p\delta(\zeta)^\alpha\,dA(\zeta).
\end{align*}
This proves \eqref{eq:derivative-from-function}.  Combining it with
Lemma~\ref{lem:point} gives \eqref{eq:complete-local-reverse}.
\end{proof}

The proof uses only interior disks whose radii are fixed multiples of
the boundary distance.  Thus, no
regularity of $\partial\Omega$ is involved in
Proposition~\ref{prop:local-reverse}.

\begin{remark}

For the upper half-plane $\mathbb H$, a similar argument appears in
\cite[Proposition~2.3]{MW}.  In this setting the converse estimate also
has a direct proof.  Suppose that $g$ is analytic in $\mathbb H$ and
$\lim_{y\to\infty}g(x+iy)=0$.  Then
\begin{equation*}
 g(x+iy)=-i\int_y^\infty g'(x+it)\,dt
\end{equation*} 
for $x+iy\in\mathbb H$.  For $p>1$ and $\varepsilon>0$, H\"older's
inequality gives
\begin{align*}
|g(x+iy)| 
&\leq\left(\int_y^\infty
 t^{-1-\varepsilon/(p-1)}\,dt\right)^{(p-1)/p}
 \left(\int_y^\infty t^{p-1+\varepsilon}
 |g'(x+it)|^p\,dt\right)^{1/p}.
\end{align*}
Given $\alpha>-1$, choose $\varepsilon>0$ such that
$\alpha-\varepsilon>-1$.  It follows that
\[
 y^{\alpha}|g(x+iy)|^p
 \lesssim y^{\alpha-\varepsilon}\int_y^\infty
 t^{p-1+\varepsilon}|g'(x+it)|^p\,dt.
\]
Integrating over $\mathbb H$ and applying Tonelli's theorem yields
\[
 \int_{\mathbb H}|g(z)|^p y^{\alpha}\,dA(z)
 \lesssim
 \int_{\mathbb H}|g'(z)|^p y^{\alpha+p}\,dA(z).
\]
This is the half-plane estimate used in \cite[Theorem~3.2]{LSY}.
\end{remark}

\section{Weighted Poincar\'e inequalities and distance-weight integrability}
\label{sec:poincare-input}

For $1<p<\infty$ and a real exponent $\alpha$, we use the notation
\[
 \|h\|_{L^p(\Omega,\delta^\alpha)}
 :=\left(\int_\Omega |h(z)|^p\delta(z)^\alpha\,dA(z)\right)^{1/p}.
\]
All inequalities involving possibly infinite integrals are understood
in the extended sense.

The global estimate relies on the following result of
L\'opez-Garc\'\i a and Ojea \cite[Theorem~5.3]{LGO}.

\begin{theorem}[Distance-weighted Poincar\'e inequality]\label{thm:LGO}
Let $\Omega\subset\mathbb C$ be a bounded John domain, let $1<p<\infty$, and
let $\beta\in\R$ satisfy
\begin{equation}\label{eq:LGO-condition}
 \beta p> \dimA(\partial\Omega)-2.
\end{equation}
Suppose that $u\in W^{1,1}_{\mathrm{loc}}(\Omega)$ satisfies
$u\in L^p(\Omega,\delta^{\beta p})$,
$\nabla u\in L^p(\Omega,\delta^{(\beta+1)p})$,
and
\[
 \int_\Omega u(z)\delta(z)^{\beta p}\,dA(z)=0.
\]
Then there is a constant $C>0$ such that
\begin{equation}\label{eq:LGO}
 \|u\|_{L^p(\Omega,\delta^{\beta p})}
 \le C\|\nabla u\|_{L^p(\Omega,\delta^{(\beta+1)p})}.
\end{equation}
\end{theorem}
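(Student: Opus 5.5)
The plan is the standard local-to-global argument on a Whitney decomposition, organized along the tree given by the John curves. The global step becomes a weighted Hardy inequality on that tree, and the Assouad-dimension hypothesis \eqref{eq:LGO-condition} is exactly what makes this Hardy inequality bounded. Write $d=\dimA(\partial\Omega)$ and fix $d<d'<\beta p+2$.

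\textbf{Step 1 (reduction).} Since $\delta(z)\le\diam\Omega$, the condition $\beta p>d-2$ together with the counting estimate in Step~3 gives $\delta^{\beta p}\in L^1(\Omega)$. Hence the weighted mean $u_\Omega=\int u\,\delta^{\beta p}/\int\delta^{\beta p}$ is defined. For every constant $c$,
\[
 \|u-u_\Omega\|_{L^p(\delta^{\beta p})}\le 2\|u-c\|_{L^p(\delta^{\beta p})},
\]
so it suffices to bound $\|u-c\|_{L^p(\delta^{\beta p})}$ for one suitable constant $c$. I take $c=u_{Q_0}$, the unweighted average of $u$ over a fixed central Whitney square $Q_0$ containing the John center.

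\textbf{Step 2 (tree structure and local estimates).} Take a Whitney decomposition $\{Q\}$ of $\Omega$ with $\ell(Q)\asymp\delta$ on $Q$, and slightly enlarged squares $Q^*$ with bounded overlap. Using the John curves, assign to each $Q$ a parent $Q_p$ adjacent to it, one step closer to $Q_0$ along a John curve. This makes $\{Q\}$ a rooted tree. By the John condition, the \emph{shadow} $S(Q)=\bigcup\{Q'\succeq Q\}$ of descendants satisfies $S(Q)\subset B(x_Q,C\ell(Q))$, and every descendant $Q'$ of $Q$ has $\ell(Q')\lesssim\ell(Q)$. On each $Q^*$ the weight is comparable to the constant $\ell(Q)^{\beta p}$, so the unweighted Poincaré inequality on squares gives
\[
 \|u-u_Q\|_{L^p(Q,\delta^{\beta p})}\lesssim \|\nabla u\|_{L^p(Q^*,\delta^{(\beta+1)p})}.
\]
Also $|u_Q-u_{Q_p}|\lesssim \ell(Q)^{-2/p}\ell(Q)\|\nabla u\|_{L^p(Q^*\cup Q_p^*)}$. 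Summing along the chain from $Q$ to $Q_0$ gives
\[
 |u_Q-u_{Q_0}|\le\sum_{Q'\preceq Q} a_{Q'},\qquad a_{Q'}:=\ell(Q')^{1-2/p}\|\nabla u\|_{L^p(Q'^*)}.
\]

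\textbf{Step 3 (Hardy inequality on the tree; main obstacle).} It remains to show
\[
 \sum_Q \ell(Q)^{\beta p+2}\Bigl(\sum_{Q'\preceq Q}a_{Q'}\Bigr)^p\lesssim\sum_Q \ell(Q)^{\beta p+2}\,a_Q^p,
\]
since the right side is $\asymp\|\nabla u\|^p_{L^p(\delta^{(\beta+1)p})}$ by bounded overlap. By the known characterization of weighted Hardy inequalities on trees (or directly, by inserting a factor $\ell(Q')^{\pm\epsilon}$ in Hölder's inequality as in the half-plane remark), it suffices to have the geometric decay
\[
 \sum_{Q\succeq Q',\ \ell(Q)=2^{-k}\ell(Q')}\ell(Q)^{\beta p+2}\lesssim 2^{-k\eta}\,\ell(Q')^{\beta p+2}
\]
for some $\eta>0$. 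The Whitney squares of side $2^{-k}\ell(Q')$ in $S(Q')$ lie within distance $\asymp 2^{-k}\ell(Q')$ of $\partial\Omega\cap B(x_{Q'},C\ell(Q'))$. By the definition of Assouad dimension their number is $\lesssim 2^{kd'}$, so the sum is $\lesssim 2^{k(d'-\beta p-2)}\ell(Q')^{\beta p+2}$, and $\eta=\beta p+2-d'>0$. The same count summed from the root gives $\delta^{\beta p}\in L^1$, which completes Step~1. I expect this step to be the delicate one: the geometry must ensure that descendants stay inside a ball of comparable size and that the Assouad count applies uniformly at every scale and location.

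\textbf{Step 4 (assembly).} Split $u-u_{Q_0}=(u-u_Q)+(u_Q-u_{Q_0})$ on each $Q$, use Step~2 for the first term and Step~3 for the second, and sum over $Q$ to obtain \eqref{eq:LGO}. The assumption $u\in L^p(\Omega,\delta^{\beta p})$ is needed only to make $u_\Omega$ well defined, via Hölder's inequality.
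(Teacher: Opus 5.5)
Your proposal is correct and follows essentially the route the paper indicates in the remark after Theorem~\ref{thm:LGO}; the paper itself only cites L\'opez-Garc\'\i a--Ojea. That route is local Poincar\'e on Whitney squares, telescoping along John chains, and a discrete Hardy (shadow-packing) estimate, and your Assouad-dimension count of Whitney squares of side $2^{-k}\ell(Q')$ inside a shadow is exactly the summability condition the paper only alludes to. The one step to state more carefully is the parent map. It must be chosen so that the whole ancestor chain of each square is a John chain; alternatively, use a Boman chain family, which need not form a tree. Your H\"older step (boundedly many ancestors per dyadic scale) and your counting step only use the shadow property $Q\subset B(x_{Q'},C\ell(Q'))$ for every ancestor $Q'$, and simply taking the next square along each square's own John curve does not by itself guarantee this for the induced chains.
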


\begin{remark}
The weighted Poincar\'e inequality in
Theorem~\ref{thm:LGO} can also be
understood directly in terms of a Whitney decomposition of the
domain. Indeed, let $\mathcal W$ be a Whitney decomposition of
$\Omega$. Since
\[
 \delta(z)\asymp \ell(Q), \qquad z\in Q,\quad Q\in\mathcal W,
\]
the usual local Poincar\'e inequality on each Whitney cube gives
\[
 \int_Q |u-u_Q|^p\delta(z)^{\beta p}\,dA(z)
 \lesssim
 \ell(Q)^{(\beta+1)p}
 \int_Q |\nabla u(z)|^p\,dA(z),
\]
where $u_Q$ denotes the average of $u$ over $Q$.
One then joins each $Q$ to a distinguished
Whitney cube by a chain of neighboring Whitney cubes adapted to the
John geometry and estimates the difference of the corresponding
averages by a telescoping argument. After summing over
$Q\in\mathcal W$, the bounded overlap of the enlarged cubes and the
associated discrete Hardy, or shadow-packing, estimate yield the
global weighted Poincar\'e inequality. 
In this approach, a suitable summability condition is needed to
control the contributions of Whitney cubes approaching the boundary.
Thus,
Theorem~\ref{thm:LGO} may be viewed as the global inequality obtained by
patching together the local Poincar\'e inequalities through the
Whitney geometry of a John domain.
\end{remark}

Taking $\beta=\alpha/p$ turns
\eqref{eq:LGO-condition} into \eqref{eq:general-assumptions}: $\alpha>\dimA(\partial\Omega)-2$,
and \eqref{eq:LGO} becomes
\begin{equation}\label{eq:weighted-poincare-alpha}
 \|u-u_{\Omega,\alpha}\|_{L^p(\Omega,\delta^\alpha)}
 \le \rev{C}\|\nabla u\|_{L^p(\Omega,\delta^{\alpha+p})},
\end{equation}
where
\begin{equation}\label{eq:weighted-average-alpha}
 u_{\Omega,\alpha}
 :=\frac{\displaystyle\int_\Omega u(z)\delta(z)^\alpha\,dA(z)}
 {\displaystyle\int_\Omega\delta(z)^\alpha\,dA(z)}.
\end{equation}
The weighted average is well-defined whenever
$\delta^\alpha\in L^1(\Omega)$.
This condition also follows from \eqref{eq:general-assumptions} as seen below.

For a bounded domain $\Omega\subset\mathbb C$ with boundary
$\Gamma=\partial\Omega$, set
\[
 U_t(\Gamma)=\{z\in\C:\dist(z,\Gamma)<t\},
 \qquad
 U_t^\Omega(\Gamma)= U_t(\Gamma)\cap\Omega = \{z\in\Omega:\delta(z)<t\}.
\]
The \emph{upper Minkowski dimension} and the \emph{interior upper Minkowski
dimension} are defined respectively by
\begin{align*}
 \dimM(\Gamma)
 &=\inf\left\{s\ge0:
 \limsup_{t\downarrow0}\frac{|U_t(\Gamma)|}{t^{2-s}}<\infty\right\},\\
 \dimMi(\Gamma)
 &=\inf\left\{s\ge0:
 \limsup_{t\downarrow0}\frac{|U_t^\Omega(\Gamma)|}{t^{2-s}}<\infty\right\}.
\end{align*}
Clearly $\dimMi(\Gamma)\le\dimM(\Gamma)$. It is also known that
\[
\dimH(\Gamma) \le \dimM(\Gamma) \le \dimA(\Gamma).
\]
Further, if $\Gamma$ is  rectifiable, then
$\dimH(\Gamma)=\dimMi(\Gamma)=\dimM(\Gamma)=1$.

For negative powers, the integrability of $\delta^\alpha$ is
completely characterized by $\dimMi(\Gamma)$.  The following criterion is
due to Brown \cite[Lemma~2.2]{Brown}.  In particular,
\eqref{eq:general-assumptions} implies
$\delta^\alpha\in L^1(\Omega)$ when $\alpha<0$, because
$\dimMi(\Gamma)\le\dimA(\Gamma)$; for $\alpha\ge0$, integrability follows
immediately from the boundedness of $\Omega$.

\begin{lemma}\label{brown}
Let $\Omega\subset\C$ be a bounded domain with boundary
$\Gamma=\partial\Omega$.
Then, for $\alpha<0$,
\[
 \int_\Omega \delta(z)^\alpha\,dA(z)<\infty
\]
if and only if $\alpha>\dimMi(\Gamma)-2$.
\end{lemma}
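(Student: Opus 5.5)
The approach is the layer-cake (Cavalieri) formula together with the definition of $\dimMi$. For $\alpha<0$ the function $\delta^\alpha$ is positive on $\Omega$ and decreasing in $\delta$. Put $\phi(t)=|U_t^\Omega(\Gamma)|=|\{z\in\Omega:\delta(z)<t\}|$. Since $\Omega$ is bounded, $\delta\le D:=\sup_\Omega\delta<\infty$ on $\Omega$, and the set where $\delta\ge t_0$ contributes at most $t_0^\alpha|\Omega|<\infty$ for any fixed $t_0>0$. So only the region near the boundary matters, and Fubini on the sublevel sets gives
\[
 \int_\Omega\delta^\alpha\,dA
 =\int_0^\infty|\{\delta^\alpha>\lambda\}|\,d\lambda
 =|\alpha|\int_0^{\infty}t^{\alpha-1}\phi(t)\,dt ,
\]
after substituting $\lambda=t^{\alpha}$; note $\{\delta^\alpha>t^\alpha\}=\{\delta<t\}$ because $\alpha<0$. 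For $t\ge D$ we have $\phi(t)=|\Omega|$, and $\int_D^\infty t^{\alpha-1}\,dt<\infty$. Hence $\delta^\alpha\in L^1(\Omega)$ if and only if $\int_0^1 t^{\alpha-1}\phi(t)\,dt<\infty$.

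For sufficiency, suppose $\alpha>\dimMi(\Gamma)-2$ and pick $s$ with $\dimMi(\Gamma)<s<\alpha+2$. By the definition of $\dimMi$ (the limsup is finite for every $s$ above the infimum, since $t^{2-s}$ increases with $s$ for $t<1$), we have $\phi(t)\le Ct^{2-s}$ for small $t$. Then $t^{\alpha-1}\phi(t)\le Ct^{\alpha+1-s}$, and $\alpha+1-s>-1$, so the integral near $0$ is finite. A cleaner dyadic variant is to split $\Omega$ into the shells $\{2^{-k-1}\le\delta<2^{-k}\}$ and sum $2^{-k\alpha}\phi(2^{-k})\lesssim\sum 2^{-k(\alpha+2-s)}$, which is a convergent geometric series.

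For necessity, suppose the integral is finite. Use the monotonicity of $\phi$ in the opposite direction: for $0<t<1$,
\[
 \phi(t)\,t^\alpha\le\int_{\{\delta<t\}}\delta^\alpha\,dA\le\int_\Omega\delta^\alpha\,dA=:M ,
\]
because $\delta^\alpha\ge t^\alpha$ on $\{\delta<t\}$. Hence $\phi(t)\le Mt^{-\alpha}=Mt^{2-(\alpha+2)}$, so the limsup with exponent $s=\alpha+2$ is finite and $\dimMi(\Gamma)\le\alpha+2$. This gives only the non-strict inequality, and upgrading it to the strict one is the main obstacle.

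To get strictness I would first try to exploit the fact that the Chebyshev bound above is wasteful: actually $\phi(t)t^\alpha\le\int_{\{\delta<t\}}\delta^\alpha\,dA\to0$ as $t\to0$, so $\phi(t)=o(t^{-\alpha})$. Genuinely gaining a power of $t$ seems to need more. One option is a self-improvement property of $\phi$ for interior neighbourhoods: Whitney-square counting suggests $\phi(t)\asymp\sum_{\ell(Q)\lesssim t}\ell(Q)^2$, and for the dyadic counts $N_k$ of Whitney squares of size $2^{-k}$ a comparison with their tails might convert the summability $\sum N_k2^{-k(2+\alpha)}<\infty$ into a geometric decay at a slightly better exponent. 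If no such self-improvement holds in general, then at the critical exponent $\alpha=\dimMi(\Gamma)-2$ one should test the borderline case directly. Since Brown's lemma is cited rather than proved in the paper, I would check this endpoint against the source \cite{Brown}. I would also note that only the sufficiency direction, which the argument above settles rigorously, is used in the paper, namely for the well-definedness of $u_{\Omega,\alpha}$ under \eqref{eq:general-assumptions}.
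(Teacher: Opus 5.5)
Your two one-sided arguments are correct. The sufficiency half, via the layer-cake formula or its dyadic variant, is essentially the argument the paper sketches in the remark following Lemma~\ref{brown}; the paper gives no proof of either direction beyond citing Brown. Your Chebyshev bound $\phi(t)t^{\alpha}\le\int_\Omega\delta^\alpha\,dA$ correctly shows that the integral diverges whenever $\alpha<\dimMi(\Gamma)-2$.

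\textbf{The endpoint claim is false in general.} The step you could not close is divergence at $\alpha=\dimMi(\Gamma)-2$, and no self-improvement argument can supply it: the ``only if'' as stated fails at the endpoint. Write $\phi(t)=|U_t^\Omega(\Gamma)|$. Any bounded domain with $\phi(t)\asymp t^{2-s}\log^{-2}(1/t)$ as $t\to0$, for some $1<s<2$, has $\dimMi(\Gamma)=s$, and yet
\[
 \int_\Omega\delta(z)^{s-2}\,dA(z)=(2-s)\int_0^\infty t^{s-3}\phi(t)\,dt<\infty,
\]
because $\int_0^{1/2}\frac{dt}{t\log^2(1/t)}<\infty$.

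Such domains exist. Fix $\tfrac12<\sigma<1$, and set $g_k=(k\log k)^{-1/\sigma}$, $a_n=\sum_{k\ge n}g_k$, and $A=\{0\}\cup\{a_n\}_{n\ge2}$. Let $\Omega=D\setminus(A\times A)$ for an open disk $D\supset A\times A$; this is connected because $A\times A$ is countable and closed.
\begin{itemize}
\item Put $N(t)=\#\{k:g_k\ge2t\}\asymp t^{-\sigma}/\log(1/t)$. The tail $a_{N+1}$ is $\asymp N g_N\asymp tN$, so the $t$-neighbourhood $A_t\subset\R$ satisfies $|A_t|\asymp tN(t)\asymp t^{1-\sigma}/\log(1/t)$.
\item The inclusions $A_{t/\sqrt2}\times A_{t/\sqrt2}\subset(A\times A)_t\subset A_t\times A_t$ then give $|(A\times A)_t|\asymp t^{2-2\sigma}/\log^2(1/t)$.
\item Since $2-2\sigma<1$, this dominates the $O(t)$ contribution of $\partial D$, so $s=2\sigma$.
\end{itemize}
In this example $\phi(t)t^{\alpha}\to0$, as you observed must happen, but no power of $t$ is gained. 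So the true content of the lemma is exactly the pair of one-sided statements you proved. Divergence at the endpoint needs extra information, for example $\liminf_{t\to0}\phi(t)t^{s-2}>0$ with $s=\dimMi(\Gamma)$.

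\textbf{The necessity direction is used in the paper.} Your closing remark that only sufficiency is used is inaccurate. Necessity is used in the proof of Proposition~\ref{prop:dimension-counterexample}, and exactly at the endpoint when $\dimM(\partial\Omega)=p$, for example $d=p$ in Corollary~\ref{cor:rohde-counterexamples}. For the Ahlfors $d$-regular snowflake this case is fine. In the proof of Lemma~\ref{lem:minkowski-corkscrew}, Ahlfors regularity forces $N\gtrsim r^{-d}$ for the maximal $4r$-separated family. Hence $\phi(r)\gtrsim r^{2-d}$ for small $r$, and $\int_0^{r_1}t^{d-3}\phi(t)\,dt\gtrsim\int_0^{r_1}dt/t=\infty$. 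For a general John domain with $\dimM(\partial\Omega)=p$, however, Lemma~\ref{brown} as stated does not justify the divergence.
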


\begin{remark}
The definition of $\dimMi(\Gamma)$ implies that for any $s>\dimMi(\Gamma)$,
there exist constants $C_0>0$ and
$t_0>0$ such that
\[
 \bigl|\{z\in\Omega:\delta(z)<t\}\bigr|
 \leq C_0 t^{2-s},
 \qquad 0<t<t_0.
\]
By decomposing this boundary neighborhood into
the dyadic layers
\[
E_k=
 \left\{
 z\in\Omega:
 2^{-k-1}t_0\leq\delta(z)<2^{-k}t_0
 \right\},
 \qquad k=0,1,\ldots,
\] 
we estimate $\sum_{k=0}^\infty \int_{E_k} \delta(z)^\alpha\,dA(z)$.
This shows that
\[
 \int_\Omega\delta(z)^\alpha\,dA(z)<\infty
 \qquad\text{for every }\alpha>s-2.
\]
\end{remark}

\section{\texorpdfstring{The global direction:
from higher to lower derivatives}{The global direction: from higher to
lower derivatives}}
\label{sec:base-disk}

Throughout this section, assume that $\Omega$ is a bounded John
domain, $1<p<\infty$, and
$\alpha>\dimA(\partial\Omega)-2$, as in
\eqref{eq:general-assumptions}.  Fix $z_0\in\Omega$ and write
\[
 \delta_0=\delta(z_0),
 \qquad
 B_0=B\left(z_0,\frac{\delta_0}{4}\right),
 \qquad
 M_\alpha=\left(\int_\Omega\delta(z)^\alpha\,dA(z)\right)^{1/p}<\infty.
\]
For a locally integrable real-valued function $u$, put
\[
 u_{B_0}=\frac{1}{|B_0|}\int_{B_0}u(z)\,dA(z).
\]

\begin{lemma}\label{lem:bounded-base}
Let $u$ be a bounded real-valued function in
$W^{1,1}_{\mathrm{loc}}(\Omega)$ such that
$\nabla u\in L^p(\Omega,\delta^{\alpha+p})$.  Then
\begin{equation}\label{eq:base-disk-poincare}
 \|u-u_{B_0}\|_{L^p(\Omega,\delta^\alpha)}
 \le C_0\|\nabla u\|_{L^p(\Omega,\delta^{\alpha+p})},
\end{equation}
where $C_0$ is independent of $u$.
\end{lemma}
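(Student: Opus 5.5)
Since $u$ is bounded and $\delta^\alpha\in L^1(\Omega)$ (Lemma~\ref{brown} together with \eqref{eq:general-assumptions}, or boundedness of $\Omega$ when $\alpha\ge0$), we have $u\in L^p(\Omega,\delta^\alpha)$ with $\|u\|_{L^p(\Omega,\delta^\alpha)}\le \|u\|_\infty M_\alpha<\infty$. Together with $\nabla u\in L^p(\Omega,\delta^{\alpha+p})$, this means the hypotheses of Theorem~\ref{thm:LGO} (with $\beta=\alpha/p$) hold for $v=u-u_{\Omega,\alpha}$. Boundedness is what guarantees that the weighted average $u_{\Omega,\alpha}$ in \eqref{eq:weighted-average-alpha} is finite and that the a priori integrability assumption of Theorem~\ref{thm:LGO} is satisfied. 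We therefore get \eqref{eq:weighted-poincare-alpha}:
\[
 \|u-u_{\Omega,\alpha}\|_{L^p(\Omega,\delta^\alpha)}\le C\|\nabla u\|_{L^p(\Omega,\delta^{\alpha+p})}.
\]

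Next we replace the weighted global average by the disk average $u_{B_0}$. By the triangle inequality,
\[
 \|u-u_{B_0}\|_{L^p(\Omega,\delta^\alpha)}\le \|u-u_{\Omega,\alpha}\|_{L^p(\Omega,\delta^\alpha)}+|u_{\Omega,\alpha}-u_{B_0}|\,M_\alpha .
\]
To bound the constant difference, we write $u_{\Omega,\alpha}-u_{B_0}=\frac{1}{|B_0|}\int_{B_0}(u_{\Omega,\alpha}-u)\,dA$. Then we apply H\"older on $B_0$, using that $\delta\asymp\delta_0$ there (indeed $3\delta_0/4\le\delta\le5\delta_0/4$ on $B_0$). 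This gives
\[
 |u_{\Omega,\alpha}-u_{B_0}|\le |B_0|^{-1/p}\Bigl(\int_{B_0}|u-u_{\Omega,\alpha}|^p\,dA\Bigr)^{1/p}\le C(\delta_0,\alpha)\,\|u-u_{\Omega,\alpha}\|_{L^p(\Omega,\delta^\alpha)}.
\]
Combining this with the first step yields \eqref{eq:base-disk-poincare}, with $C_0$ depending only on $\Omega,p,\alpha,z_0$.

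The only delicate point is checking the hypotheses of Theorem~\ref{thm:LGO}, in particular $u\in L^p(\Omega,\delta^{\beta p})$. This is exactly why the lemma is restricted to bounded $u$; a truncation argument will presumably be used later to remove this restriction for analytic $g$. The passage from $u_{\Omega,\alpha}$ to $u_{B_0}$ is routine, but it is essential for later use, because $u_{B_0}$, unlike $u_{\Omega,\alpha}$, behaves stably under truncation and limits. Since $u-u_{\Omega,\alpha}$ has zero $\delta^\alpha$-weighted mean, it satisfies the normalization in Theorem~\ref{thm:LGO} directly, so no further adjustment is needed.
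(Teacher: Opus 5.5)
Your proposal is correct and follows essentially the same route as the paper: use boundedness of $u$ and $\delta^\alpha\in L^1(\Omega)$ to put $u-u_{\Omega,\alpha}$ into the setting of Theorem~\ref{thm:LGO}, then compare $u_{\Omega,\alpha}$ with $u_{B_0}$ via H\"older on $B_0$ (where $\delta\asymp\delta_0$) and conclude with Minkowski's inequality. The resulting constant $C_0$ depends only on $\Omega,p,\alpha,z_0$, exactly as in the paper.
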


\begin{proof}
Since $u$ is bounded and $M_\alpha<\infty$, we have
$u\in L^p(\Omega,\delta^\alpha)$.  Let
$a=u_{\Omega,\alpha}$ be the weighted average in
\eqref{eq:weighted-average-alpha}.  The weighted Poincar\'e inequality
\eqref{eq:weighted-poincare-alpha} gives
\begin{equation}\label{eq:ua-poincare}
 \|u-a\|_{L^p(\Omega,\delta^\alpha)}
 \le C_p\|\nabla u\|_{L^p(\Omega,\delta^{\alpha+p})}.
\end{equation}
Moreover,
\[
 |u_{B_0}-a|
 \le |B_0|^{-1/p}\|u-a\|_{L^p(B_0)}.
\]
For $z\in B_0$, we have
$3\delta_0/4\le\delta(z)\le 5\delta_0/4$,
so $\delta^\alpha$ is comparable to $\delta_0^\alpha$ on $B_0$.
Since $|B_0|\asymp\delta_0^2$, it follows that
\begin{equation}\label{eq:average-comparison}
 |u_{B_0}-a|
 \le C_{p,\alpha}\delta_0^{-(\alpha+2)/p}
 \|u-a\|_{L^p(\Omega,\delta^\alpha)}.
\end{equation}
Using Minkowski's inequality, \eqref{eq:average-comparison}, and
\eqref{eq:ua-poincare}, we obtain
\begin{align*}
 \|u-u_{B_0}\|_{L^p(\Omega,\delta^\alpha)}
 &\le \|u-a\|_{L^p(\Omega,\delta^\alpha)}
      +M_\alpha|a-u_{B_0}|\\
 &\le \left(1+C_{p,\alpha}M_\alpha
      \delta_0^{-(\alpha+2)/p}\right)
      \|u-a\|_{L^p(\Omega,\delta^\alpha)}\\
 &\le C_0\|\nabla u\|_{L^p(\Omega,\delta^{\alpha+p})}.
\end{align*}
This proves the statement.
\end{proof}

The boundedness assumption can be removed for harmonic functions by
truncation.

\begin{lemma}\label{lem:harmonic-base}
If $u$ is a real-valued harmonic function in $\Omega$, then
\begin{equation}\label{eq:harmonic-base}
 \|u-u_{B_0}\|_{L^p(\Omega,\delta^\alpha)}
 \le C_0\|\nabla u\|_{L^p(\Omega,\delta^{\alpha+p})}.
\end{equation}
\end{lemma}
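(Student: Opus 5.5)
We prove the lemma by truncation followed by a limiting argument. If the right-hand side of \eqref{eq:harmonic-base} is infinite there is nothing to prove, so assume $\nabla u\in L^p(\Omega,\delta^{\alpha+p})$. For $N>0$ put
\[
 u_N=\max\{-N,\min\{u,N\}\}.
\]
Since $u$ is smooth, $u_N$ is a bounded function in $W^{1,1}_{\mathrm{loc}}(\Omega)$. Moreover, $\nabla u_N=\nabla u\,\chi_{\{|u|<N\}}$ almost everywhere, which follows from the chain rule for Lipschitz truncations of Sobolev functions. Hence $|\nabla u_N|\le|\nabla u|$ pointwise a.e., and $\nabla u_N\in L^p(\Omega,\delta^{\alpha+p})$. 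Lemma~\ref{lem:bounded-base} therefore applies to each $u_N$ with the same constant $C_0$:
\[
 \|u_N-(u_N)_{B_0}\|_{L^p(\Omega,\delta^\alpha)}
 \le C_0\|\nabla u_N\|_{L^p(\Omega,\delta^{\alpha+p})}
 \le C_0\|\nabla u\|_{L^p(\Omega,\delta^{\alpha+p})}.
\]

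Next we let $N\to\infty$. Since $u$ is continuous on $\overline{B_0}\subset\Omega$, it is bounded there. Thus $u_N=u$ on $B_0$ once $N\ge\sup_{B_0}|u|$, and so $(u_N)_{B_0}=u_{B_0}$ for all large $N$. Pointwise on $\Omega$ we have $u_N\to u$, so $|u_N-u_{B_0}|^p\delta^\alpha\to|u-u_{B_0}|^p\delta^\alpha$. Fatou's lemma then gives
\[
 \|u-u_{B_0}\|_{L^p(\Omega,\delta^\alpha)}
 \le\liminf_{N\to\infty}\|u_N-u_{B_0}\|_{L^p(\Omega,\delta^\alpha)}
 \le C_0\|\nabla u\|_{L^p(\Omega,\delta^{\alpha+p})},
\]
which is \eqref{eq:harmonic-base}. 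Note that $u\in L^p(\Omega,\delta^\alpha)$ is not needed a priori; it comes out as a consequence of the estimate.

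The main point to check is that the truncation stays in the class covered by Lemma~\ref{lem:bounded-base}, and that the constant $C_0$ there does not depend on the bound for $u$. The first holds because $u_N\in W^{1,\infty}_{\mathrm{loc}}$ with the gradient formula above. The second is clear from the proof of Lemma~\ref{lem:bounded-base}: boundedness was used only to ensure $u\in L^p(\Omega,\delta^\alpha)$, so that Theorem~\ref{thm:LGO} applies. The constant $C_0=1+C_{p,\alpha}M_\alpha\delta_0^{-(\alpha+2)/p}$ times the Poincar\'e constant involves no bound on $u$. Harmonicity is used only for local boundedness, through continuity, on $B_0$.
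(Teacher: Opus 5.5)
Your proposal is correct and follows the paper's proof essentially step for step: truncate with $T_N$, use $|\nabla u_N|\le|\nabla u|$ to apply Lemma~\ref{lem:bounded-base} with a constant independent of $N$, note that $(u_N)_{B_0}=u_{B_0}$ for large $N$ because $u$ is bounded on $\overline{B_0}$, and conclude with Fatou's lemma. Your explicit check that $C_0$ does not depend on the bound for $u$ spells out a point the paper leaves implicit.
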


\begin{proof}
If the right-hand side of \eqref{eq:harmonic-base} is infinite, there is nothing
to prove.  We may therefore assume that
$\nabla u\in L^p(\Omega,\delta^{\alpha+p})$.
For $N>0$, define the truncation
\[
 T_N(t)=
 \begin{cases}
  -N, & t<-N,\\
  t,  & -N\le t\le N,\\
  N,  & t>N,
 \end{cases}
 \qquad
 u_N=T_N\circ u.
\]
The function $T_N$ is $1$-Lipschitz.  Since $u$ is smooth, the Sobolev
chain rule gives
\begin{equation*}
 |\nabla u_N|\le|\nabla u|
 \qquad\text{a.e. in }\Omega.
\end{equation*}
Applying Lemma~\ref{lem:bounded-base} to $u_N$ yields
\begin{equation}\label{eq:truncated-estimate}
 \|u_N-(u_N)_{B_0}\|_{L^p(\Omega,\delta^\alpha)}
 \le C_0\|\nabla u\|_{L^p(\Omega,\delta^{\alpha+p})}.
\end{equation}
Because $\overline{B_0}\Subset\Omega$, the harmonic function $u$ is
bounded on $B_0$.  Thus, for all sufficiently large $N$,
$u_N=u$ on $B_0$ and consequently $(u_N)_{B_0}=u_{B_0}$.  Also,
$u_N(z)\to u(z)$ pointwise in $\Omega$.  Fatou's lemma applied to
\eqref{eq:truncated-estimate} proves \eqref{eq:harmonic-base}.
\end{proof}

\begin{proposition}
\label{prop:global-analytic}
Under the assumptions of Theorem~\ref{thm:main-general}, every analytic
function $g$ in $\Omega$ satisfies
\begin{equation}\label{eq:function-from-derivative}
 \int_\Omega |g(z)|^p\delta(z)^\alpha\,dA(z)
 \le C\left\{
 \int_\Omega |g'(z)|^p\delta(z)^{\alpha+p}\,dA(z)
 +|g(z_0)|^p\delta_0^{\alpha+2}\right\}.
\end{equation}
\end{proposition}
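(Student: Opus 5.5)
We apply Lemma~\ref{lem:harmonic-base} to the real and imaginary parts of $g$ separately, and then control the constant $g_{B_0}$ by the point value $g(z_0)$ plus the derivative term.

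\emph{Step 1: the Poincar\'e estimate for $g$.} Write $g=u+iv$ with $u,v$ real harmonic in $\Omega$. By the Cauchy--Riemann equations, $|\nabla u|=|\nabla v|=|g'|$. Lemma~\ref{lem:harmonic-base} applied to $u$ and to $v$ therefore gives
\[
 \|g-g_{B_0}\|_{L^p(\Omega,\delta^\alpha)}
 \le 2C_0\|g'\|_{L^p(\Omega,\delta^{\alpha+p})},
\]
where $g_{B_0}=u_{B_0}+iv_{B_0}$ is the area average of $g$ over $B_0=B(z_0,\delta_0/4)$. If the right-hand side of \eqref{eq:function-from-derivative} is infinite there is nothing to prove, so we may assume it is finite.

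\emph{Step 2: replacing the average by the point value.} Since $g$ is harmonic, the mean value property on the disk $B_0$ centered at $z_0$ gives $g_{B_0}=g(z_0)$. This is the key simplification. By Minkowski's inequality,
\[
 \|g\|_{L^p(\Omega,\delta^\alpha)}
 \le \|g-g(z_0)\|_{L^p(\Omega,\delta^\alpha)}+|g(z_0)|M_\alpha .
\]
Here $M_\alpha<\infty$ by Lemma~\ref{brown} together with assumption \eqref{eq:general-assumptions}.

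\emph{Step 3: matching the form of \eqref{eq:function-from-derivative}.} We can write
\[
 |g(z_0)|M_\alpha=\bigl(M_\alpha\delta_0^{-(\alpha+2)/p}\bigr)\bigl(|g(z_0)|^p\delta_0^{\alpha+2}\bigr)^{1/p}.
\]
The factor $M_\alpha\delta_0^{-(\alpha+2)/p}$ is a constant depending only on $\Omega$, $\alpha$, $p$ and $z_0$. Raising the resulting inequality to the $p$-th power and using $(a+b)^p\le 2^{p-1}(a^p+b^p)$ yields \eqref{eq:function-from-derivative}.

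There is no real obstacle here. The analytic difficulty, namely removing boundedness and ensuring $u\in L^p(\Omega,\delta^\alpha)$ before Theorem~\ref{thm:LGO} can be applied, was already handled by the truncation argument in Lemma~\ref{lem:harmonic-base}. The only point to check is that the mean value property identifies $g_{B_0}$ exactly with $g(z_0)$. This holds because $B_0$ is a disk centered at $z_0$ with $\overline{B_0}\subset\Omega$.
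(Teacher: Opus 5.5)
Your proposal is correct and follows the paper's own proof essentially step for step. You apply Lemma~\ref{lem:harmonic-base} to $u$ and $v$ using $|\nabla u|=|\nabla v|=|g'|$, identify $u_{B_0}+iv_{B_0}=g(z_0)$ by the area mean-value property on $B_0=B(z_0,\delta_0/4)$, use Minkowski's inequality with $M_\alpha$, and absorb $M_\alpha\delta_0^{-(\alpha+2)/p}$ into the constant (only a minor nit: Lemma~\ref{brown} covers $\alpha<0$, while for $\alpha\ge0$ the finiteness of $M_\alpha$ follows from the boundedness of $\Omega$).
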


\begin{proof}
We may assume that the right-hand side of
\eqref{eq:function-from-derivative} is finite. 
Write $g=u+iv$, where $u$ and $v$ are real-valued harmonic functions.
The Cauchy--Riemann equations imply
\begin{equation}\label{eq:CR-gradients}
 |\nabla u|=|\nabla v|=|g'|.
\end{equation}
By the mean-value property on the disk $B_0$, we have
$u_{B_0}=u(z_0)$ and $v_{B_0}=v(z_0)$.
Lemma~\ref{lem:harmonic-base} and \eqref{eq:CR-gradients} therefore give
\begin{align*}
 \|g-g(z_0)\|_{L^p(\Omega,\delta^\alpha)}
 &\le \|u-u(z_0)\|_{L^p(\Omega,\delta^\alpha)}
      +\|v-v(z_0)\|_{L^p(\Omega,\delta^\alpha)}\\
 &\le 2C_0\|g'\|_{L^p(\Omega,\delta^{\alpha+p})}.
\end{align*}
It follows that
\begin{align*}
 \|g\|_{L^p(\Omega,\delta^\alpha)}
 &\le 2C_0\|g'\|_{L^p(\Omega,\delta^{\alpha+p})}
      +M_\alpha|g(z_0)|\\
 &\le C\left(
 \|g'\|_{L^p(\Omega,\delta^{\alpha+p})}
 +\delta_0^{(\alpha+2)/p}|g(z_0)|\right),
\end{align*}
where the constant absorbs
$M_\alpha\delta_0^{-(\alpha+2)/p}$.  Raising this inequality to the
$p$-th power proves \eqref{eq:function-from-derivative}.
\end{proof}

\begin{remark}[The shorter argument under an a priori integrability assumption]
Suppose in addition that $g\in L^p(\Omega,\delta^\alpha)$, and let
$b=g_{\Omega,\alpha}$.  Applying
\eqref{eq:weighted-poincare-alpha} to the real and imaginary parts
gives
\[
 \|g-b\|_{L^p(\Omega,\delta^\alpha)}
 \lesssim \|g'\|_{L^p(\Omega,\delta^{\alpha+p})}.
\]
Since $|g-b|^p$ is subharmonic, the mean-value inequality on $B_0$
gives
\[
 |g(z_0)-b|
 \lesssim\delta_0^{-(\alpha+2)/p}
 \|g-b\|_{L^p(\Omega,\delta^\alpha)}.
\]
Thus
\[
 \|g\|_{L^p(\Omega,\delta^\alpha)}
 \lesssim
 \|g'\|_{L^p(\Omega,\delta^{\alpha+p})}
 +\delta_0^{(\alpha+2)/p}|g(z_0)|.
\]
The truncation argument above is what removes the a priori assumption
that the left-hand side is finite.
\end{remark}

\section{Sharpness for \texorpdfstring{$1<p<2$}{1<p<2}: quasidisk counterexamples}
\label{sec:counterexamples}

We now show that the dimension restriction in the first-derivative comparison is essential. 
More precisely, when $1<p<2$, the comparison may fail on bounded John domains whose boundary is sufficiently large.

For a bounded John domain $\Omega$ with boundary $\Gamma$,
the upper Minkowski dimension $\dimM(\Gamma)$ and the interior upper Minkowski dimension $\dimMi(\Gamma)$
are the same.  We include the short argument because it identifies
precisely where the John geometry enters.

\begin{lemma}
\label{lem:minkowski-corkscrew}
Let $\Omega\subset\C$ be a bounded domain satisfying the interior
corkscrew condition; that is, there exist $c\in(0,1)$ and $r_0>0$
such that, for every $\xi\in\partial\Omega$ and $0<r<r_0$, there is a
point $x\in\Omega\cap B(\xi,r)$ for which
$B(x,cr)\subset\Omega\cap B(\xi,r)$.
Then
\[
 \dimMi(\partial\Omega)=\dimM(\partial\Omega).
\]
In particular, this equality holds for every bounded John domain.
\end{lemma}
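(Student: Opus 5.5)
Since $U_t^\Omega(\Gamma)\subset U_t(\Gamma)$, we always have $\dimMi(\Gamma)\le\dimM(\Gamma)$, so only the reverse inequality needs proof. The plan is to bound $|U_t(\Gamma)|$ above by a constant multiple of $|U_{t'}^\Omega(\Gamma)|$ for some $t'\asymp t$. The device is a maximal separated net on $\Gamma$, together with the corkscrew balls attached to its points.

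Fix $0<t<r_0$ and choose a maximal $t$-separated set $\{\xi_j\}_{j=1}^{N}\subset\Gamma$. By maximality $\Gamma\subset\bigcup_j B(\xi_j,t)$, and therefore $U_t(\Gamma)\subset\bigcup_j B(\xi_j,2t)$. This gives
\[
 |U_t(\Gamma)|\le 4\pi t^2N .
\]
For each $j$, apply the corkscrew condition with radius $r=t/2$. This gives a point $x_j$ with
\[
 B(x_j,ct/2)\subset\Omega\cap B(\xi_j,t/2).
\]
Because the $\xi_j$ are $t$-separated, the balls $B(\xi_j,t/2)$ are pairwise disjoint, and hence so are the balls $B(x_j,ct/2)$. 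Every point $z$ of $B(x_j,ct/2)$ lies in $\Omega$ and satisfies $\delta(z)\le|z-\xi_j|<t/2$, so each such ball is contained in $U^\Omega_{t}(\Gamma)$. Summing over the disjoint balls gives
\[
 N\cdot \pi c^2t^2/4\le |U_t^\Omega(\Gamma)|.
\]
Combining the two displays yields
\[
 |U_t(\Gamma)|\le 16c^{-2}\,|U_t^\Omega(\Gamma)| \qquad (0<t<r_0).
\]
Now let $s>\dimMi(\Gamma)$. Then $|U_t^\Omega(\Gamma)|\le C_0t^{2-s}$ for small $t$, and the comparison gives $|U_t(\Gamma)|\lesssim t^{2-s}$. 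Hence $\dimM(\Gamma)\le s$, and letting $s\downarrow\dimMi(\Gamma)$ gives equality.

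For the final assertion, I would invoke the fact already quoted in the introduction: a bounded John domain satisfies a uniform interior corkscrew condition \cite[Lemma~8.9]{ABBS}. The corkscrew balls there may be stated as $B(x,cr)\subset\Omega$ with $x\in B(\xi,r)$, rather than $B(x,cr)\subset\Omega\cap B(\xi,r)$. This is harmless after replacing $r$ by $r/2$ and $c$ by $\min\{c,1\}/2$.

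The only delicate point is ensuring the corkscrew balls are disjoint and lie inside the interior neighborhood $U_t^\Omega(\Gamma)$. Taking the corkscrew radius to be half the separation scale handles both at once, as shown above.
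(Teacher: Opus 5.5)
Your proof is correct and is essentially the paper's argument: a maximal separated net on $\Gamma$, pairwise disjoint corkscrew disks at the net points lying inside $U_t^\Omega(\Gamma)$, and a comparison of the two area bounds. The only differences are in the constants. The paper uses a $4r$-separated net with corkscrew radius $r$ and gets the factor $25/c^2$, while you use a $t$-separated net with corkscrew radius $t/2$ and get $16/c^2$. Your remark on adapting the formulation of the corkscrew condition from \cite{ABBS} is a harmless extra.
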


\begin{proof}
Write $\Gamma=\partial\Omega$.  Fix
$0<r<\min\{r_0,\diam\Omega\}$ and choose a maximal
$4r$-separated family $\{\xi_j\}_{j=1}^N\subset\Gamma$.  Maximality
gives
\[
 \Gamma\subset\bigcup_{j=1}^N B(\xi_j,4r),
\]
and hence
\begin{equation}\label{eq:outer-layer-cover}
 |U_r(\Gamma)|\le 25N \pi r^2.
\end{equation}
For each $j$, choose an interior corkscrew disk
$B(x_j,cr)\subset\Omega\cap B(\xi_j,r)$.
The centers satisfy
\[
 |x_i-x_j|\ge |\xi_i-\xi_j|-|x_i-\xi_i|-|x_j-\xi_j|
 \ge 2r
 \qquad (i\ne j),
\]
so these corkscrew disks are pairwise disjoint.  Moreover, if
$y\in B(x_j,cr) \subset B(\xi_j,r)$, then
\[
 \delta(y)\le |y-\xi_j|<r.
\]
Therefore
\begin{equation}\label{eq:inner-layer-lower}
|U_r^\Omega(\Gamma)|\ge N\pi c^2r^2.
\end{equation}
Combining \eqref{eq:outer-layer-cover} and
\eqref{eq:inner-layer-lower} yields
$|U_r(\Gamma)|\le 25|U_r^\Omega(\Gamma)|/c^2$.
Together with the obvious inequality
$|U_r^\Omega(\Gamma)|\le|U_r(\Gamma)|$, this
proves the equality of the two upper Minkowski dimensions.  

Recall that by \cite[Lemma~8.9]{ABBS} every bounded John domain satisfies the
interior corkscrew condition. Thus the asserted equality holds for every
bounded John domain.
\end{proof}

\begin{proposition}
\label{prop:dimension-counterexample}
Let $1<p<2$, and let $\Omega$ be a bounded John domain with
$\dimM(\partial\Omega)\ge p$.  For any fixed
$z_0\in\Omega$, there is no finite constant $C$ such that
\begin{equation}\label{eq:false-global-comparison}
 \int_\Omega |g(z)|^p\delta(z)^{p-2}\,dA(z)
 \le C\left\{
  |g(z_0)|^p\delta(z_0)^p
  +\int_\Omega |g'(z)|^p\delta(z)^{2p-2}\,dA(z)
 \right\}
\end{equation}
for every analytic function $g$ in $\Omega$.
\end{proposition}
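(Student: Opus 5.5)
Take the constant function $g\equiv1$. Then $g'\equiv0$, so the right-hand side of \eqref{eq:false-global-comparison} equals $C\,\delta(z_0)^p<\infty$. It therefore suffices to show that the left-hand side is infinite:
\[
 \int_\Omega\delta(z)^{p-2}\,dA(z)=\infty .
\]
Here $\alpha=p-2\in(-1,0)$ is negative, so Lemma~\ref{brown} applies. It says that $\delta^{p-2}\in L^1(\Omega)$ if and only if $p-2>\dimMi(\partial\Omega)-2$, that is, if and only if $\dimMi(\partial\Omega)<p$. Since $\Omega$ is a bounded John domain, Lemma~\ref{lem:minkowski-corkscrew} gives $\dimMi(\partial\Omega)=\dimM(\partial\Omega)$. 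By hypothesis this is $\ge p$, so $\delta^{p-2}\notin L^1(\Omega)$, and no finite $C$ can work.

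The only step needing care is the borderline case $\dimM(\partial\Omega)=p$. The plan is to rely on the ``only if'' direction of Brown's criterion exactly as stated: integrability forces the strict inequality $\alpha>\dimMi-2$, so equality already yields divergence.

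As a sanity check of that direction, I would run the dyadic-layer computation of the remark in reverse. Suppose $\int_\Omega\delta^{\alpha}<\infty$ with $\alpha<0$. Chebyshev's inequality gives $|U_t^\Omega|\le t^{-\alpha}\int_{U_t^\Omega}\delta^\alpha$, so $|U_t^\Omega|=o(t^{-\alpha})$. This only shows $\dimMi\le2+\alpha$, which is not strict. The strict inequality is the genuine content of Brown's lemma; for John domains I expect it to come from an Ahlfors-regularity or self-improvement property of the boundary layers. I will cite Lemma~\ref{brown} as given and not reprove it. If one wanted to avoid relying on strictness, the paper's actual application (Rohde snowflakes with $\dimA(\partial\Omega)\ge p$ and $\delta^{p-2}\notin L^1$) could verify non-integrability directly. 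For the proposition as stated, however, the argument is: $g\equiv1$, then Lemma~\ref{lem:minkowski-corkscrew}, then Lemma~\ref{brown}.
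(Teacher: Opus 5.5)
Your argument is the paper's own proof ($g\equiv1$, then Lemma~\ref{lem:minkowski-corkscrew}, then Lemma~\ref{brown}). However, the borderline case you flagged is a genuine gap, and citing Lemma~\ref{brown} does not close it. The ``only if'' half of that lemma, with a \emph{strict} inequality, is false in general, and the John condition does not rescue it. For $\alpha<0$ the layer-cake formula gives
\[
 \int_\Omega\delta^\alpha\,dA=|\alpha|\int_0^\infty t^{\alpha-1}\,|U_t^\Omega(\partial\Omega)|\,dt .
\]
So integrability at the critical exponent depends on the finer behaviour of $|U_t^\Omega|$, not only on $\dimMi$. Your Chebyshev bound $\alpha\ge\dimMi(\partial\Omega)-2$ is all that the dimension alone yields.

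Here is a concrete John counterexample. Let $K\subset[0,1]^2$ be the four-corner Cantor dust whose $4^n$ stage-$n$ squares have side $\ell_n=4^{-n/p}n^{-2/p}$. The ratios $\ell_n/\ell_{n-1}$ stay in a compact subset of $(0,1/2)$ and tend to $4^{-1/p}$. Put $\Omega=(-1,2)^2\setminus K$.
\begin{itemize}
\item $\Omega$ is a bounded John domain: from any point one escapes through the cross-shaped gaps between sibling squares, whose widths are comparable to the side of the parent square.
\item Since $4^n\ell_n^2=\ell_n^{2-p}n^{-2}$, one has $|U_t^\Omega(\partial\Omega)|\asymp t^{2-p}\log^{-2}(1/t)$ for small $t$, and hence $\dimM(\partial\Omega)=p$.
\item Yet $\int_\Omega\delta^{p-2}\,dA\lesssim 1+\int_0^{1/2}\frac{dt}{t\log^{2}(1/t)}<\infty$.
\end{itemize}
So on this domain $g\equiv1$ is not a counterexample to \eqref{eq:false-global-comparison}, and the argument proves nothing there.

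What the argument does establish is the proposition under the hypothesis $\limsup_{t\downarrow0}t^{p-2}|U_t(\partial\Omega)|>0$. This holds in particular whenever $\dimM(\partial\Omega)>p$. The proof of Lemma~\ref{lem:minkowski-corkscrew} gives $|U_t(\partial\Omega)|\lesssim|U_t^\Omega(\partial\Omega)|$. On the other hand, $\int_{U_t^\Omega}\delta^{p-2}\,dA\ge t^{p-2}|U_t^\Omega(\partial\Omega)|$, and the left side would tend to $0$ if $\delta^{p-2}\in L^1(\Omega)$, because $\bigcap_{t>0}U_t^\Omega=\emptyset$. This also covers Corollary~\ref{cor:rohde-counterexamples} with $d=p$, since $d$-Ahlfors regularity gives $|U_t(\partial\Omega)|\asymp t^{2-d}$; this is exactly the direct verification you suggested. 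In the borderline case $\dimM(\partial\Omega)=p$ with vanishing upper $p$-dimensional Minkowski content, you need either this stronger hypothesis or a non-constant test function. The same gap is present in the paper's one-line proof.
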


\begin{proof}
Take $g\equiv1$.  Its derivative vanishes, and the right-hand side of
\eqref{eq:false-global-comparison} is the finite number
$C\delta(z_0)^p$.  The left-hand side is infinite by
Lemmas~\ref{brown} and \ref{lem:minkowski-corkscrew}.
\end{proof}

In particular, the
comparison in Theorem~\ref{thm:main-general} cannot hold on all
bounded John domains when $1<p<2$, because bounded quasidisks can have
boundary dimension at least $p$.  We now give an explicit family of such
counterexamples.

The snowflake construction by Rohde \cite{Rohde} produces quasicircles by repeatedly
replacing each line segment by a four-segment polygonal arc.
Fix a parameter
$\rho\in(1/4,1/2)$
and use the $\rho$-replacement at every stage (Figure~\ref{fig:rohde-iteration} illustrates the iterative replacement): each segment is replaced by four segments, each of length $\rho$ times the length of its parent segment, and the same operation is performed on every side of the initial square with a side length $1$. The resulting
self-similar Rohde snowflake $\Gamma_\rho$ is a quasicircle.  

\begin{figure}[htbp]
\centering
\begin{tikzpicture}[line cap=round,line join=round,
  every node/.style ={text=black},>={Stealth[length=2.2mm]}]
  \begin{scope}[xshift=0cm,yshift=0.65cm,x=2.65cm,y=2.65cm]
    \draw[line width=0.9pt] (0,0)--(1,0);
  \end{scope}
  \node at (1.325,0.25) {$I_0$};

  \draw[->,line width=0.65pt] (2.80,0.65)--(3.35,0.65);

  \begin{scope}[xshift=3.50cm,yshift=0.65cm,x=2.65cm,y=2.65cm]
    \draw[line width=0.9pt]
      (0,0)--(0.34,0)--(0.50,0.30)--(0.66,0)--(1,0);
    \node[font=\scriptsize,above] at (0.17,0) {$\rho$};
    \node[font=\scriptsize,above left] at (0.42,0.15) {$\rho$};
    \node[font=\scriptsize,above right] at (0.58,0.15) {$\rho$};
    \node[font=\scriptsize,above] at (0.83,0) {$\rho$};
  \end{scope}
  \node at (4.825,0.25) {$I_1$};

  \draw[->,line width=0.65pt] (6.30,0.65)--(6.85,0.65);

  \begin{scope}[xshift=7.00cm,yshift=0.65cm,x=2.65cm,y=2.65cm]
    \draw[line width=0.9pt]
      (0.00000,0.00000)--(0.11560,0.00000)--(0.17000,0.10200)--
      (0.22440,0.00000)--(0.34000,0.00000)--(0.39440,0.10200)--
      (0.33000,0.19800)--(0.44560,0.19800)--(0.50000,0.30000)--
      (0.55440,0.19800)--(0.67000,0.19800)--(0.60560,0.10200)--
      (0.66000,0.00000)--(0.77560,0.00000)--(0.83000,0.10200)--
      (0.88440,0.00000)--(1.00000,0.00000);
  \end{scope}
  \node at (8.325,0.25) {$I_2$};

  \node[font=\Large] at (10.15,0.66) {$\cdots$};
  \draw[->,line width=0.65pt] (10.48,0.65)--(11.05,0.65);
  \node[align=center] at (12.05,0.66)
    {$I_\infty$\\[-1mm]{\scriptsize(one of four sides of $\Gamma_\rho$)}};
\end{tikzpicture}
\caption{Iterative generation of a self-similar
Rohde snowflake arc.}
\label{fig:rohde-iteration}
\end{figure}

Each side of the initial polygon generates a self-similar arc
consisting of four similar copies, each with contraction ratio
$\rho$.  These arcs satisfy the open set condition.  Hence each of
them, and therefore their finite union $\Gamma_\rho$, has Hausdorff
dimension $d$ determined by
$4\rho^d=1$.
Moreover, $\Gamma_\rho$ is $d$-Ahlfors regular, and consequently
$\dimH(\Gamma_\rho)=d$.

\begin{corollary}
\label{cor:rohde-counterexamples}
For every $1<p<2$ and every $d\in[p,2)$, there exists a bounded
quasidisk $\Omega$ such that
\[
 \dimH(\partial\Omega)=\dimM(\partial\Omega)=\dimA(\partial\Omega)=d
 \qquad\text{and}\qquad
 \int_{\Omega}\delta(z)^{p-2}\,dA(z)=\infty.
\]
Consequently, the constant function $g\equiv1$ is a counterexample to
\eqref{eq:false-global-comparison} on $\Omega$.
\end{corollary}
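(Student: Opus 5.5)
Given $1<p<2$ and $d\in[p,2)$, the plan is to take the self-similar Rohde snowflake $\Gamma_\rho$ with parameter $\rho$ chosen by $4\rho^d=1$, i.e.\ $\rho=4^{-1/d}$, and let $\Omega$ be the bounded Jordan domain it encloses.

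\textbf{Choice of $\rho$.} Since $1<d<2$, we get $\rho\in(4^{-1},4^{-1/2})=(1/4,1/2)$, which is exactly the admissible range. By Rohde's construction \cite{Rohde}, $\Gamma_\rho$ is then a quasicircle, so $\Omega$ is a bounded quasidisk. Bounded quasidisks are uniform domains and hence John domains.

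\textbf{Dimensions.} As recorded before the statement, the four similitudes generating each side satisfy the open set condition, so $\Gamma_\rho$ is $d$-Ahlfors regular with $\dimH(\Gamma_\rho)=d$. For the remaining equalities I will use the standard covering argument for Ahlfors regular sets. Given $x\in\Gamma_\rho$ and $0<r<R$, take a maximal $r$-separated subset of $\Gamma_\rho\cap B(x,R)$. The disks of radius $r/2$ about its points are disjoint, each has $\mathcal H^d$-measure $\gtrsim r^d$, and all lie in $B(x,2R)$, whose $\mathcal H^d$-measure is $\lesssim R^d$. Hence
\[
N_r\bigl(\Gamma_\rho\cap B(x,R)\bigr)\lesssim (R/r)^d,
\]
which gives $\dimA\le d$. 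Combined with
\[
d=\dimH\le\dimM\le\dimA,
\]
all three dimensions equal $d$.

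\textbf{Divergence of the integral.} This is the main point. Since $d\ge p$, we have $\dimM(\partial\Omega)=d\ge p$, and Lemma~\ref{lem:minkowski-corkscrew} gives $\dimMi(\partial\Omega)=d$ because $\Omega$ is John. The exponent $\alpha=p-2$ is negative, so Lemma~\ref{brown} says $\delta^{p-2}\in L^1(\Omega)$ if and only if $p-2>d-2$. That fails, so $\int_\Omega\delta^{p-2}\,dA=\infty$.

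One subtlety: Lemma~\ref{brown} is stated through the infimum defining $\dimMi$, and this should be checked at the endpoint $d=p$. If needed, I will verify divergence directly from the lower bound
\[
|U_t^\Omega(\Gamma)|\gtrsim t^{2-d},
\]
which follows from Ahlfors regularity together with the corkscrew count in the proof of Lemma~\ref{lem:minkowski-corkscrew}: there are $N\asymp t^{-d}$ disjoint corkscrew disks of area $\asymp t^2$ inside $U^\Omega_t$. Summing over dyadic layers $E_k$, each layer $\{2^{-k-1}\le\delta<2^{-k}\}$ contributes $\gtrsim 2^{-k(p-2)}2^{-k(2-d)}=2^{k(d-p)}\ge1$. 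For this, the layers themselves need measure $\gtrsim 2^{-k(2-d)}$; I get this by choosing corkscrew disks with $\delta$ comparable to $2^{-k}$, which is possible since on a corkscrew disk of radius $c\,2^{-k}/2$ centered at $x$ one has $\delta\asymp2^{-k}$. The sum over $k$ therefore diverges.

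\textbf{Consequence.} Finally, apply Proposition~\ref{prop:dimension-counterexample} (or argue directly) with $g\equiv1$. Then $g'=0$, so the right-hand side of \eqref{eq:false-global-comparison} equals $C\delta(z_0)^p<\infty$, while the left-hand side is $\int_\Omega\delta^{p-2}\,dA=\infty$.
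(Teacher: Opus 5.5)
Your proposal is correct and follows the paper's proof. The paper also takes $\rho=4^{-1/d}\in(1/4,1/2)$, uses Rohde's theorem to get the quasidisk and $d$-Ahlfors regularity to get $\dimH=\dimM=\dimA=d$, and then concludes through Lemmas~\ref{lem:minkowski-corkscrew} and \ref{brown} (via Proposition~\ref{prop:dimension-counterexample}) with $g\equiv1$.

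Your endpoint check at $d=p$ is not optional; it is needed. Finiteness of $\int_\Omega\delta^{\alpha}\,dA$ only forces $\alpha\ge\dimMi(\Gamma)-2$ by Chebyshev. Indeed, a layer growth $|U_t^\Omega(\Gamma)|\asymp t^{2-p}/\log^2(1/t)$ gives $\dimMi(\Gamma)=p$ while $\delta^{p-2}\in L^1(\Omega)$. So your Ahlfors-regular lower bound on the layers is what actually settles the case $d=p$. One small correction: the corkscrew disks at scale $2^{-k}$ fall into a bounded number of consecutive dyadic layers rather than exactly one, which does not affect the divergence.
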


\begin{proof}
Choose
$\rho=4^{-1/d}$.
Because $1<d<2$, one has $1/4<\rho<1/2$.  Let $\Gamma_\rho$ be the
self-similar Rohde snowflake described above, and let $\Omega$ be its
bounded complementary component.  Then $\Omega$ is a quasidisk and
\[
\dimH(\partial\Omega)=\dimM(\partial\Omega)
=\dimA(\partial\Omega)=d\ge p.
\] 
Here the equality of all three dimensions follows from the
$d$-Ahlfors regularity noted above.  Thus
Proposition~\ref{prop:dimension-counterexample} gives the
counterexample.
In particular, taking $d=p$ produces a bounded quasidisk whose boundary
has Hausdorff dimension exactly $p$.
\end{proof}

\section{\texorpdfstring{An inward-cusp
counterexample}{An inward-cusp counterexample}}
\label{sec:cusp-counterexample}

For higher derivatives, the boundary-dimension restriction in
Corollary~\ref{cor:delicate} disappears when $m\ge2$.  One may therefore
ask whether the John-domain hypothesis can also be weakened.  The
following example shows that it cannot in general: the same construction
works for every $m\ge1$ and every prescribed $s>1$.

\begin{proposition}
\label{prop:cusp-counterexample}
Let $1<p<\infty$, let $m\ge1$ be an integer, and let $s>1$.
There exist a bounded, simply connected Jordan domain $\Omega\subset\C$
and an analytic function $f$ on $\Omega$ such that $\Omega$ has
rectifiable boundary, is an $s$-John domain but not a John domain, and,
for every fixed $z_0\in\Omega$,
\[
 \int_\Omega |f^{(m)}(z)|^p\delta(z)^{mp-2}\,dA(z)=\infty,
\]
whereas
\[
 |f^{(m)}(z_0)|^p\delta(z_0)^{mp}
 +
 \int_\Omega |f^{(m+1)}(z)|^p
 \delta(z)^{(m+1)p-2}\,dA(z)<\infty.
\]
Consequently, the John-domain assumption in the higher-derivative
comparison cannot in general be omitted, even when $m\geq2$.
\end{proposition}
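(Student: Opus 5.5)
The plan is to take a disk with a thin power-type cusp attached to it and place a power singularity at the cusp tip. Fix $s>1$. Let $\Omega$ be the union of a disk $D$, say $B(2,1)$, and the tentacle
\[
 T=\{x+iy:\ 0<x<3/2,\ |y|<x^{s}\}.
\]
The corners where $\partial T$ meets $\partial D$ are smoothed, so that $\Omega$ is a Jordan domain with rectifiable, piecewise smooth boundary. The cusp tip at $0$ is the only non-smooth boundary point. Thus $\Omega$ is bounded and simply connected, and it lies in the right half-plane $\{\operatorname{Re}z>0\}$.

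First I verify the geometric claims.
\begin{itemize}
 \item For a point $x+iy\in T$, follow the horizontal segment toward $D$ and then a fixed curve to the center. Along it, $\delta(\gamma(t))\gtrsim (x+t)^{s}\ge t^{s}$, because the half-width $x^{s}$ of $T$ has slope $sx^{s-1}\to0$. This gives the $s$-John condition.
 \item $\Omega$ is not John. A John curve from $x\in(0,1/2)$ must leave $T$. At arclength $t\approx 2x$ it sits at a point with real part at most $3x$, where $\delta\lesssim (3x)^{s}$. This is incompatible with $\delta\ge C_J\cdot 2x$ as $x\to0$.
 \item I also need the comparison $\delta(x+iy)\asymp x^{s}-|y|$ in $T$ for small $x$, again using that the boundary slopes tend to $0$. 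Hence, for every $\beta>-1$,
\[
 \int_{-x^{s}}^{x^{s}}\delta(x+iy)^{\beta}\,dy\asymp x^{s(\beta+1)} .
\]
\end{itemize}

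Next I choose the function. Since $\Omega\subset\{\operatorname{Re}z>0\}$, the principal branch $g(z)=z^{-a}$ with $a>0$ is analytic on $\Omega$. Let $f$ be any $m$-fold primitive of $g$, which exists because $\Omega$ is simply connected, so that $f^{(m)}=g$ and $f^{(m+1)}=-az^{-a-1}$. On $D$ and on the part of $T$ with $x\ge x_1>0$, both $g$ and $g'$ are bounded. The weights $\delta^{mp-2}$ and $\delta^{(m+1)p-2}$ are integrable there, since $mp-2>-1$ and the boundary is smooth away from $0$. So only the cusp $\{x<x_1\}$ matters. There $|z|\asymp x$ because $|y|\le x^{s}\le x$.

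Using the cross-section estimate with $\beta=mp-2>-1$ and $\beta=(m+1)p-2>-1$, I obtain
\[
 \int|g|^{p}\delta^{mp-2}\asymp\int_0^{x_1}x^{-ap+s(mp-1)}\,dx,
 \qquad
 \int|g'|^{p}\delta^{(m+1)p-2}\asymp\int_0^{x_1}x^{-(a+1)p+s((m+1)p-1)}\,dx .
\]
The second exponent exceeds the first by $(s-1)p>0$. Choosing
\[
 a=\frac{s(mp-1)+1}{p}>0
\]
makes the first exponent equal to $-1$, so the first integral diverges. The second exponent is then $-1+(s-1)p>-1$, so the second integral converges. The point term $|f^{(m)}(z_0)|^{p}\delta(z_0)^{mp}$ is finite for every fixed $z_0\in\Omega$. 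This proves the proposition.

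The main obstacle is making the two-sided comparison $\delta\asymp x^{s}-|y|$ uniform near the tip, so that the cross-sectional integrals really scale like $x^{s(\beta+1)}$. The upper bound $\delta\le x^{s}-|y|$ is trivial. For the lower bound, I would first show that the nearest boundary point to $x+iy$ lies on the same boundary curve $y=\pm u^{s}$, with $|u-x|\lesssim x^{s}$; the tip itself is at distance about $x\gg x^{s}$. Then I would linearize, using that the slope $su^{s-1}$ is small on that interval, to get $\delta\ge c\,(x^{s}-|y|)$.
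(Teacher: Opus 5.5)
Your proposal is correct and is essentially the paper's proof: the same cusp $|y|<x^s$, the same $g(z)=z^{-q}$ with $q=(s(mp-1)+1)/p$ giving exponents $-1$ versus $-1+p(s-1)$, and an $m$-fold primitive from simple connectivity. The differences are minor: the paper uses the bare region $\{0<x<1,\ |y|<x^s\}$, so no gluing or smoothing is needed; it proves non-Johnness via the corkscrew condition rather than directly from the definition as you do; and it needs less than your two-sided comparison, namely $\delta\asymp x^s$ only on the strip $|y|<x^s/2$ for the divergent integral and only the trivial bound $\delta\le x^s-|y|$ for the convergent one, since $(m+1)p-2>0$. One small correction: along your horizontal path the right bound is $\delta(\gamma(t))\gtrsim (x+t)^s-|y|\ge (x+t)^s-x^s\ge t^s$, by superadditivity of $u\mapsto u^s$; the bound $\delta(\gamma(t))\gtrsim (x+t)^s$ as you wrote it fails near $t=0$ when $|y|$ is close to $x^s$.
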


\begin{proof}
Consider the inward cusp domain
\[
 \Omega_s
 =
 \{z=x+iy:0<x<1,\ |y|<x^s\}.
\]
This is a bounded simply connected Jordan domain with rectifiable
boundary.  It is not a John domain.  Indeed, if
$z=x+iy\in\Omega_s\cap B(0,r)$, then
\[
 \delta(z)\leq x^s-|y|\leq x^s\leq r^s.
\]
Hence every disk contained in $\Omega_s\cap B(0,r)$ has radius at
most $r^s$.  Since $s>1$, this rules out an interior corkscrew disk
of radius comparable to $r$ at the cusp point $0$.
Since every bounded John domain satisfies the interior corkscrew
condition (see \cite[Lemma~8.9]{ABBS}), $\Omega_s$ cannot be a John domain.

\begin{figure}[htbp]
\centering
\begin{tikzpicture}[x=6.2cm,y=1.35cm,
  every node/.style={text=black},
  >={Stealth[length=2.2mm]}]
  \fill[revisionblue!13]
    plot[domain=0:1,samples=100] (\x,{\x^2})
    -- (1,-1)
    -- plot[domain=1:0,samples=100] (\x,{-\x^2}) -- cycle;
  \draw[line width=0.9pt]
    plot[domain=0:1,samples=100] (\x,{\x^2});
  \draw[line width=0.9pt]
    plot[domain=0:1,samples=100] (\x,{-\x^2});
  \draw[line width=0.9pt] (1,-1)--(1,1);
  \draw[->,line width=0.55pt] (-0.06,0)--(1.12,0)
    node[below] {$x$};
  \draw[->,line width=0.55pt] (0,-1.20)--(0,1.25)
    node[left] {$y$};
  \node[above] at (0.58,0.45) {$y=x^s$};
  \node[below] at (0.58,-0.45) {$y=-x^s$};
  \node[above right] at (0.02,0.02) {inward cusp};
  \node[below] at (1,0) {$1$};
  \node at (0.72,0) {$\Omega_s$};
\end{tikzpicture}
\caption{The inward cusp domain
$\Omega_s=\{x+iy:0<x<1,\ |y|<x^s\}$ for $s>1$.}
\label{fig:inward-cusp}
\end{figure}

We now verify the assertion that $\Omega_s$ is an
$s$-John domain, using the definition given in the introduction.
Set $a=1/4$ and take $z_*=(a,0)$ as the John center.  First consider
$z=(x,y)\in\Omega_s$ with $0<x\leq a$.  Join $z$ vertically to
$(x,0)$ and then horizontally to $z_*$.  Parametrize this polygonal
arc by arclength $t$ starting at $z$.  On the vertical part, suppose
first that $y\ge0$ and write $\gamma(t)=(x,y-t)$ for $0\le t\le y$.
The vertical gap from $\gamma(t)$ to the upper graph
$$
\{(x, x^s):\; 0\leq x\leq 1\}
$$
is
$x^s-y+t\ge t$. Moreover, this graph is a Lipschitz graph with Lipschitz constant at most $s$. The standard distance estimate for a Lipschitz graph yields that the distance to a Lipschitz graph is comparable to the vertical distance to that graph. Therefore, 
\[
  \delta(\gamma(t))\geq c_s t\geq c_s t^s,
\]
because this part has length at most $x^s\leq1$. The case $y<0$ is identical using the lower graph.

On the horizontal part, write $\gamma(t)=(\xi,0)$, where
$x\leq\xi\leq a$.  The same Lipschitz-graph estimate gives
$\delta(\xi,0)\geq c_s\xi^s$.  Moreover, the arclength from $z$ to
$(\xi,0)$ satisfies
\[
 t=|y|+(\xi-x)\leq x^s+\xi-x\leq\xi,
\]
since $0<x<1$ and hence $x^s\leq x$.  Therefore
\[
 \delta(\gamma(t))\geq c_s\xi^s\geq c_s t^s.
\]

It remains to treat points with $x>a$.  The truncated domain
\[
 D=\Omega_s\cap\{x>a/2\}
\]
is a bounded Lipschitz domain, hence a uniform domain and, in particular,
a John domain.  Changing its John center to the fixed interior point
$z_*$ only changes the John constant.  Thus there are constants $c>0$
and $L<\infty$, independent of $z\in D$, such that $z$ can be joined to
$z_*$ by an arclength-parametrized curve of length at most $L$ satisfying
$\delta_D(\gamma(t))\geq c t$.  Since
$\delta_{\Omega_s}\geq\delta_D$ and
$t\geq L^{1-s}t^s$ for $0\leq t\leq L$, this curve satisfies
\[
 \delta_{\Omega_s}(\gamma(t))\geq cL^{1-s}t^s.
\]
Combining the two cases proves that $\Omega_s$ is an $s$-John domain.

Put
$\alpha=mp-2$
and choose
\[
 q=\frac{s(\alpha+1)+1}{p}
   =\frac{s(mp-1)+1}{p}>0.
\]
Since $\Omega_s$ is contained in the right half-plane, we may use
the principal branch of the logarithm and define
$g(z)=z^{-q}$ for 
$z\in\Omega_s$.
Then $g$ is analytic on $\Omega_s$ and
$g'(z)=-qz^{-q-1}$.

We first show that
\[
 \int_{\Omega_s}|g(z)|^p\delta(z)^\alpha\,dA(z)=\infty.
\]
Since $s>1$, we have $|z|\asymp x$ throughout $\Omega_s$.  Moreover,
for sufficiently small $\varepsilon>0$,
$\delta(x+iy)\asymp x^s$
on the central part of the cusp
\[
 E_\varepsilon
 =
 \left\{
 x+iy:
 0<x<\varepsilon,\quad |y|<\frac12x^s
 \right\}.
\]
This can also be derived from the comparability between the distance to a Lipschitz graph and  the vertical distance to that graph.  

It follows that
\begin{equation*}
 \int_{\Omega_s}|g(z)|^p\delta(z)^\alpha\,dA(z)
 \gtrsim
 \int_0^\varepsilon
 \int_{-x^s/2}^{x^s/2}
 x^{-qp}x^{s\alpha}\,dy\,dx 
\asymp
 \int_0^\varepsilon
 x^{-qp+s(\alpha+1)}\,dx.
\end{equation*}
By the definition of $q$,
$-qp+s(\alpha+1)=-1$,
and therefore
\[
 \int_{\Omega_s}|g(z)|^p\delta(z)^\alpha\,dA(z)
 =\infty.
\]

We next estimate the derivative term.  Since
$\delta(x+iy)\leq x^s-|y|$
and
$\alpha+p=(m+1)p-2>0$,
we obtain
\begin{align*}
 \int_{\Omega_s}|g'(z)|^p
 \delta(z)^{\alpha+p}\,dA(z)
 &\lesssim
 \int_0^1 x^{-(q+1)p}
 \int_{-x^s}^{x^s}
 (x^s-|y|)^{\alpha+p}\,dy\,dx \\
 &\asymp
 \int_0^1
 x^{-(q+1)p+s(\alpha+p+1)}\,dx.
\end{align*}
Using
$qp=s(\alpha+1)+1$,
the exponent in the last integral becomes
\begin{equation*}
 -(q+1)p+s(\alpha+p+1)
 =-1+p(s-1).
\end{equation*}
Since $s>1$, we have
$-1+p(s-1)>-1$,
and hence
\[
 \int_{\Omega_s}|g'(z)|^p
 \delta(z)^{\alpha+p}\,dA(z)<\infty.
\]

Finally, since $\Omega_s$ is simply connected, the analytic function
$g$ admits an $m$-fold primitive.  Thus there exists an analytic
function $f$ on $\Omega_s$ such that
$f^{(m)}=g$.
Consequently,
$f^{(m+1)}=g'$,
and the preceding estimates give
\[
 \int_{\Omega_s}|f^{(m)}(z)|^p
 \delta(z)^{mp-2}\,dA(z)=\infty
\]
while
\[
 \int_{\Omega_s}|f^{(m+1)}(z)|^p
 \delta(z)^{(m+1)p-2}\,dA(z)<\infty.
\]
For every fixed $z_0\in\Omega_s$, the quantity
\[
 |f^{(m)}(z_0)|^p\delta(z_0)^{mp}
 =
 |g(z_0)|^p\delta(z_0)^{mp}
\]
is finite.  Hence the global comparison in
Corollary~\ref{cor:consecutive} fails on $\Omega_s$.
\end{proof}

\end{document}